\def \eps {\epsilon}
\newtheorem{theorem}{Theorem}[section]
\newtheorem{corollary}[theorem]{Corollary}
\newtheorem{example}[theorem]{Example}
\newtheorem{lemma}[theorem]{Lemma}
\newtheorem{proposition}[theorem]{Proposition}
\newtheorem{question}{Question}[section]
\title{Chain Recurrence and Positive Shadowing in Linear Dynamics}
\author{Mayara Braz Antunes, Gabriel Elias Mantovani, Régis Varão   }
\newtheorem{maintheorem}{Theorem}
\begin{document}

\footnotetext{(M.B.A, G.E.M., R.V.) IMECC - UNICAMP, Departamento de Matematica, Rua Sérgio Buarque de
Holanda, 651, 13083-970 Campinas-SP, Brazil \\ \textit{E-mail addresses: ra163508@ime.unicamp.br, mantovani.gabriel@gmail.com, varao@unicamp.br }}

\maketitle

\begin{abstract}
    We study positive shadowing and chain recurrence in the context of linear operators acting on Banach spaces or even on normed vector spaces. We show that for linear operators there is only one chain recurrent set, and this set is a closed invariant subspace. We prove that every chain transitive linear dynamical system with positive shadowing property is frequently hypercyclic and, as a corollary, we obtain that every positive shadowing hypercyclic linear dynamical system is frequently hypercyclic. 
\end{abstract}

\textit{Keywords: Chain recurrence, positive shadowing, frequently hypercyclic, non wandering set.}


\section{Introduction}


Linear Dynamics is the study of linear operators on topological vector spaces. It is relatively simple to describe the dynamical behavior of any linear operator on a finite dimensional vector space. However, when the dimension of the vector space is infinite it turns out that the dynamical behavior of linear operators becomes very rich. To grasp how rich the dynamics can be we mention Feldman's result \cite{feldman} in which he proves that there is a linear operator $T:X \rightarrow X$ on a Banach space $X$ which ``contains'' all topological dynamical systems on compact spaces. More precisely, given a continuous map $f:M \rightarrow M$ on a compact metric space $M$, there is a compact $T$-invariant subset $Y$ in $X$ such that $f$ is conjugate to $T|_Y$.

In linear dynamics one is often interested in the study of the behavior  orbits of a system. For instance, one might ask whether a system has a dense orbit. In topological dynamics a point whose orbit is dense is called a \emph{transitive point}, while in linear dynamics we call such a point a \emph{hypercyclic vector}. A system which has a hypercyclic vector is called \textit{hypercyclic system}. Linear Dynamics is not solely influenced by Dynamical Systems, it is, of course, also influence by Functional Analysis. That is why definitions are not necessarily following those typically found in compact dynamics. A final remark of the importance of linear dynamics is the classical Functional Analysis open problem the ''Invariant Subspace Problem" (see \cite{dynamicsoflinearoperators}) which can be formulated in terms of closure of orbits of points, i.e. from a Dynamical System point of view.

There has been a recent effort to use tools that have helped dynamicists understand compact dynamical systems in the context of linear dynamical systems. As examples of such effort we may cite \cite{Messaoudi} and \cite{bernardes2020shadowing} that applied the concept of shadowing and hyperbolicity for linear dynamical systems, we may also cite \cite{brian} that used entropy in the study of translation operators. The main focus of this paper is to use the concepts of positive shadowing and chain recurrence in the study of linear dynamical operators.

We may say that both shadowing and chain recurrence study how the system responds to pseudo trajectories. In words a  pseudo trajectory (or chain) is almost a piece of orbit from the system. The difference from an actual orbit is that at each interaction of the dynamics there is a possible offset added to the result. This definition of chain occurs naturally in computational dynamics where given a dynamical system almost any orbit calculated in a computer will be a pseudo orbit, since few computer operations are error free. The formal definitions of chain recurrence and positive shadowing will be given in their respective sections. 



In this manuscript $X$ will always be a normed vector space (frequently a Banach space). We will denote by $\mathbb{K}$ the field over $X$, where $\mathbb{K} = \mathbb{R}$ or $\mathbb{K} = \mathbb{C}$. The symbol $\mathbb{N}$ denotes the set of natural numbers including $0$, that is, $\mathbb{N}=\{0,1,2,\ldots\}$.

We will now define some of the terms used in this article. Let $Y$ be a topological space and $f: Y \to Y$ a continuous function, $f$ is said to be \textbf{transitive} (or topologically transitive) if, for any pair of non-empty open sets $U, V \subset Y$, there is a natural number $N > 0$ such that $T^N U \cap V \neq \emptyset$. $f$ is said to be \textbf{topologically mixing} if, for any pair of non-empty open sets $U, V \subset Y$, there is $N \in \mathbb{N}$ such that, for every $n > N$, $f^{n}(U) \cap V \neq \emptyset$. It is a consequence of Birkhoff's Transitivity Theorem \cite{Birkhoff} that a linear dynamical system $(X,T)$, with $X$ a separable Banach space and $T : X \to X$ a linear operator, is transitive if, and only if, is hypercyclic.


An operator is frequently hypercyclic if it has a vector whose orbit visits each open set with a positive lower density. Formally, the \textbf{lower density} of a subset of natural numbers $A$ is defined by
\[\underline{dens}(A):=\liminf_{N\rightarrow\infty}\frac{\#(A\cap[1,N])}{N},\] where $\#(B)$ denotes the cardinality of the set $B$. A linear operator $T:X\rightarrow X$ on separable metric space $X$ is \textbf{frequently hypercyclic} if there is $x\in X$ such that
\[\underline{dens}(\{n\in\mathbb{N}\;:\;T^n(x)\in V\})>0\] for every non-empty open subset $V$ of $X$. 

Our main result is

\begin{maintheorem} \label{gymstheorem}
Let $X$ be a separable Banach space, and $T : X \to X$ an operator with both chain transitivity and positive shadowing property. Then $T$ is topologically mixing and frequently hypercyclic.
\end{maintheorem}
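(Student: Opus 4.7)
The plan is to split the statement into its two conclusions, tackle topological mixing first, then bootstrap a shadowing-based construction on top of the mixing to reach frequent hypercyclicity.

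\emph{Topological mixing.} Let $U, V \subset X$ be non-empty open sets; pick $u \in U$, $v \in V$, $\eta > 0$ with $B(u,\eta) \subset U$ and $B(v,\eta) \subset V$, and let $\delta > 0$ be the shadowing gauge associated with precision $\eta$. Chain transitivity supplies $\delta$-chains $u = a_0, a_1, \ldots, a_p = 0$ and $0 = b_0, b_1, \ldots, b_q = v$. The key observation is that $T(0) = 0$, so one can insert any number $m \geq 0$ of extra $0$-steps between these two chains and obtain a $\delta$-chain from $u$ to $v$ of every length $n \geq p + q$. Appending the honest orbit $T^n v, T^{n+1} v, \ldots$ past time $n$ keeps the extended sequence a $\delta$-pseudo-orbit, and positive shadowing then yields $y \in X$ with $y \in B(u,\eta) \subset U$ and $T^n y \in B(v,\eta) \subset V$. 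Hence $T^n U \cap V \neq \emptyset$ for all $n \geq p + q$, which is topological mixing.

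\emph{Frequent hypercyclicity.} Fix a countable dense sequence $(v_k) \subset X$. The goal is to produce an orbit visiting each $v_k$ on a set of times of positive lower density; the strategy is to build a pseudo-orbit with this property and shadow it. Choose pairwise disjoint $A_k \subset \mathbb{N}$ of positive lower density so that consecutive elements of $\bigcup_{j\leq k} A_j$ are separated by at least a threshold $N_k$ to be tuned later; this is the classical Bayart--Grivaux style partition behind the Frequent Hypercyclicity Criterion. Using the quantitative mixing from Part 1, for every pair $v_j, v_{j'}$ with $j, j' \leq k$ and every length $n \geq N_k$ there is a $\delta_k$-chain from $v_j$ to $v_{j'}$ of length exactly $n$, again obtained by padding through $0$. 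Concatenating these chains at the distinguished times yields a pseudo-orbit $(x_n)$ with $x_n = v_k$ for $n \in A_k$, and shadowing it produces a vector $y \in X$ with $\|T^n y - v_k\|$ small for $n \in A_k$. Letting the $v_k$ exhaust a countable basis and the precisions $\epsilon_k$ tend to zero converts this into frequent hypercyclicity of $y$.

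\emph{Main obstacle.} The delicate step is the diagonalization in Part 2: positive shadowing is set up one precision at a time, while a single vector $y$ must simultaneously witness visits to every open set with arbitrarily small precision. The standard remedy is to let the shadowing gauge $\delta_k$ shrink slowly with $k$, let the lacunarity $N_k$ of the partition grow accordingly, and verify that the concatenated sequence is a genuinely valid $\delta$-pseudo-orbit for some fixed $\delta$; alternatively, one checks directly the hypotheses of the Frequent Hypercyclicity Criterion of Bayart--Grivaux from the data produced above. This combinatorial bookkeeping is where the real work sits. Modulo it, the proof reduces to repeated application of the mixing established in Part 1 and the definition of positive shadowing, together with the fact that $T(0) = 0$, which is what makes $0$ a universal ``hub'' through which chains of arbitrary length can be routed.
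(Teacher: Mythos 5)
Your first part, topological mixing, is exactly the paper's argument (Theorem \ref{positiveshadowingmixing}): route a $\delta$-chain from $u$ to $0$, pad with an arbitrary number of zeroes, continue to $v$ and then along the true orbit of $v$, and shadow. No issues there.

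The second part has a genuine gap, and it sits precisely where you flag the ``main obstacle.'' Positive shadowing is a one-shot, fixed-precision tool: a single $\delta$-pseudo-orbit, shadowed once, produces one vector $y$ with $\|T^ny-x_n\|<\epsilon$ for the \emph{one} $\epsilon$ attached to that $\delta$. Frequent hypercyclicity requires $T^ny$ to enter balls of \emph{arbitrarily small} radius on sets of positive lower density, so no single application of shadowing to a single concatenated pseudo-orbit can suffice; your first remedy (make the concatenation a valid $\delta$-pseudo-orbit for some fixed $\delta$) therefore cannot close the gap, and your second (verify the Bayart--Grivaux criterion) is not substantiated --- nothing in the data produced gives the maps $S_n$ and the unconditional convergence that criterion demands. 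The paper's actual device, which is the real content of the proof and is absent from your sketch, is to use \emph{linearity} to superpose countably many shadowing vectors: for each $p$ one builds a $\delta_p$-pseudo-orbit $\{\gamma_n^p\}$ supported on the blocks over $\Delta_p$ (the sets from Lemma \ref{freqhypercycliclemma}), whose values are not simply the chain points $x^p_k$ but carry correction terms $-\tfrac{k}{R_p}T^n(z_0+\cdots+z_{p-1})$ that gradually cancel the interference of the previously constructed vectors; shadowing this at precision $\epsilon_p=2^{-p}$ yields $z_p$ with $\|z_p\|<2^{-p}$, $\sum_{q\le p}T^{m+R_p}(z_q)$ within $2^{-p}$ of $x_p$ for $m\in\Delta_p$, and $\|T^n(z_p)\|<2^{-p}$ off the $\Delta_p$-blocks. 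The frequently hypercyclic vector is then $z=\sum_p z_p$, with the tail $\sum_{q>p}T^{m+R_p}(z_q)$ controlled by the separation in Lemma \ref{freqhypercycliclemma}. Without this inductive, correction-laden construction (or an equally explicit substitute), your Part 2 is an outline of the statement to be proved rather than a proof.
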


Since hypercyclic systems are chain transitive, then an immediate consequence of the previous theorem is the following corollary.

\begin{corollary} \label{hyperimpliesfreq.hyper}
Let $X$ be a separable Banach space. If $T : X \to X$ is hypercyclic and has the positive shadowing property then $T$ is frequently hypercyclic and topologically mixing.
\end{corollary}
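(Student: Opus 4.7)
The corollary is intended as an essentially immediate consequence of Theorem A, so the plan is to verify that the hypotheses of Theorem A are satisfied. Positive shadowing is assumed outright, so the only non-trivial task is to deduce chain transitivity from hypercyclicity.

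To show that hypercyclic implies chain transitive, I would fix arbitrary $x, y \in X$ and $\varepsilon > 0$ and construct an $\varepsilon$-pseudo orbit from $x$ to $y$ directly. The key inputs are the continuity of $T$, which for a bounded linear operator gives the quantitative bound $\|T(u) - T(v)\| \le \|T\|\,\|u - v\|$, and the density of the orbit of a hypercyclic vector. Choose a hypercyclic vector $v$, use density to find some iterate $h := T^{n_0}(v)$ within $\varepsilon/\|T\|$ of $x$, and note that $h$ is itself hypercyclic (its orbit differs from that of $v$ only by finitely many initial terms). The density of the orbit of $h$ then provides an $m \ge 2$ with $\|T^m(h) - y\| < \varepsilon$. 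The pseudo orbit
\[
z_0 = x,\ z_1 = T(h),\ z_2 = T^2(h),\ \ldots,\ z_{m-1} = T^{m-1}(h),\ z_m = y
\]
then bridges $x$ to $y$: the first jump has error $\|T(x) - T(h)\| \le \|T\|\,\|x - h\| < \varepsilon$, the interior jumps are exact because they are genuine iterates of $T$, and the final jump has error $\|T^m(h) - y\| < \varepsilon$ by construction. Taking $m$ large sidesteps any fringe concern about very short chains.

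Having established chain transitivity, the corollary follows by direct application of Theorem A, which delivers both frequent hypercyclicity and topological mixing. I do not anticipate any real obstacle here: the substantive work lies in Theorem A, while this corollary is merely the observation that hypercyclic operators automatically satisfy the chain-transitivity hypothesis for free.
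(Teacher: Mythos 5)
Your proposal is correct and follows exactly the paper's route: the paper likewise deduces the corollary from Theorem A via the observation (stated without proof in Section 2) that every hypercyclic operator is chain transitive, which you simply spell out in detail. Your explicit $\varepsilon$-chain construction is sound, including the point that $T^{n_0}(v)$ remains hypercyclic because a dense subset of a nontrivial normed space stays dense after removing finitely many points.
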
 





We also prove

\begin{maintheorem}\label{theo:nonwandering} \label{nonwanderingfreq}
Let $X$ be a normed vector space and $T$ a bounded linear operator on $X$ that has the positive shadowing property. Let $\Omega$ be the non wandering set of $T$. Then the following holds:

\begin{enumerate}
    \item $\Omega$ is the chain recurrent set of $T$;
    \item $\Omega$ is a closed and invariant subspace of $X$;
    \item Suppose further that $X$ is a separable Hilbert space and $T$ is self-adjoint. Then $T|_{\Omega} : \Omega \to \Omega$ is topologically mixing and frequently hypercyclic, in particular either $\Omega=\{0\}$ or $\Omega$ is infinite dimensional.
\end{enumerate}
\end{maintheorem}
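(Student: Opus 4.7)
My plan for the three parts proceeds in the order they are stated. For part (1), the inclusion $\Omega \subseteq CR(T)$ holds for any continuous self-map and requires no new input. For the converse, I use positive shadowing directly: fix $x \in CR(T)$ and a neighborhood $U$ of $x$, pick $\epsilon > 0$ with the $\epsilon$-ball around $x$ contained in $U$, and let $\delta > 0$ be the associated shadowing parameter. A $\delta$-chain from $x$ to $x$ of length $n \geq 1$, repeated cyclically, is an infinite $\delta$-pseudo-orbit that is $\epsilon$-shadowed by some $y \in X$; then $y \in U$ and $T^n y \in U$, so $T^n(U) \cap U \neq \emptyset$ and $x \in \Omega$.

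Part (2) is immediate from (1) together with the structural result of the paper, announced in the abstract, that for any linear operator $T$ the chain recurrent set is a closed $T$-invariant linear subspace.

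For part (3), the strategy is to apply Theorem A to the restriction $T|_\Omega : \Omega \to \Omega$, so I must verify its hypotheses. As a closed subspace of a separable Hilbert space, $\Omega$ is itself a separable Banach space. The decisive use of self-adjointness is the following: since $\Omega$ is $T$-invariant and $T = T^{*}$, the orthogonal complement $\Omega^{\perp}$ is also $T$-invariant and the orthogonal projection $P : X \to \Omega$ commutes with $T$. Given an $\epsilon$-pseudo-orbit $(x_n) \subset \Omega$, positive shadowing of $T$ on $X$ yields $y \in X$ with $\|T^n y - x_n\| < \epsilon$; decomposing $y = Py + (I-P)y$, the vectors $T^n Py - x_n \in \Omega$ and $T^n(I-P)y \in \Omega^{\perp}$ are orthogonal, so $\|T^n Py - x_n\| \leq \|T^n y - x_n\| < \epsilon$, i.e.\ $Py \in \Omega$ shadows the pseudo-orbit and $T|_\Omega$ has positive shadowing. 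The same projection transfers chain transitivity: by the earlier result of the paper that $CR(T)$ is a single chain recurrent class, any $x, y \in \Omega$ are joined by an $\epsilon$-chain $x = x_0, \dots, x_N = y$ in $X$, and applying $P$ produces a chain in $\Omega$ since $\|T(Px_i) - Px_{i+1}\| = \|P(Tx_i - x_{i+1})\| \leq \|Tx_i - x_{i+1}\|$. Theorem A then yields that $T|_\Omega$ is topologically mixing and frequently hypercyclic; the dichotomy $\Omega = \{0\}$ or $\dim \Omega = \infty$ follows because no linear operator on a nontrivial finite-dimensional space can be hypercyclic. The main obstacle is precisely this inheritance step: without self-adjointness, neither the shadowing orbit nor the chains between points of $\Omega$ can be transferred to $\Omega$ by any canonical projection, and the reduction to Theorem A would break down.
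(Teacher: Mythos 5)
Your proposal is correct and follows essentially the same route as the paper: the cyclically repeated chain shadowed to show $CR(T)\subseteq\Omega$, the identification of $\Omega$ with the chain recurrent subspace for part (2), and for part (3) the orthogonal decomposition $X=\Omega\oplus\Omega^{\perp}$ (using $T=T^{*}$) followed by an application of Theorem A. The only difference is cosmetic: you inline the inheritance of positive shadowing and chain transitivity via the orthogonal projection $P$ (where the relevant constant is $1$), whereas the paper invokes its Proposition 3.8 and Corollary 2.9, which are proved by the same projection argument with a constant $\alpha$ coming from the closedness of the subspaces.
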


In section 2 chain recurrent systems are defined and some elementary new results for the linear dynamical setting are obtained. In section 3 we define the concept of positive shadowing and use its synergy with chain recurrence to obtain the above theorems among other smaller results. The last section brings some open questions that emerged during the elaboration of this text to motivate future research.


\section{The Chain Recurrent Subspace}

Let $(Y,d)$ be a metric space and $f: Y \rightarrow Y$ a continuous function. We say that a finite sequence $\{x_0, x_1,\ldots, x_n\}$ is an \textbf{$\epsilon$-chain} with $\epsilon>0$ if $n\in\mathbb{N}\setminus\{0\}$ and $d(f(x_i),x_{i+1})< \epsilon$ for every $ 0 \leq i < n$. Given two points $x,y\in Y$, we write $x\mathcal{R}y$ if the following holds:
\begin{center}
     given $\epsilon>0$ there is an $\epsilon$-chain beginning in $x$ and ending in $y$, $\{x_0=x,x_1,\ldots, x_{n-1},x_n=y\}$, and another beginning in $y$ and ending in $x$, $\{y_0=y, y_1,\ldots, y_{m-1},y_m=x\}$.
\end{center} The set $CR(f)=\{x\in Y\;:\;x\mathcal{R}x\}$ is called \textbf{chain recurrent set}. A point $x\in CR(f)$ is called a \textbf{chain recurrent point}. Restricted to $CR(f)$, the relation $\mathcal{R}$ is an equivalence relation. We will say that a dynamical system $(X,f)$ is \textbf{chain transitive} if $x\mathcal{R} y$ for every $x,y\in Y$. 


As an example of chain transitive system one may easily see that the identity operator on any normed vector space is chain transitive. Also any hypercyclic operator on any normed vector space is chain transitive as well. In compact dynamics it is a relevant problem to find how many different chain recurrent classes there are in the space. The next two results state that a linear dynamical system has only one chain recurrent class, which we will simply call the chain recurrent set. 

\begin{theorem} \label{spanofchainrecurrent}
 Let $X$ be a normed vector space and $T$ be a bounded linear operator acting on $X$. If $x \in X$ is chain recurrent, then every point of span$[x]$ is chain recurrent and span$[x]$ is contained in only one recurrent class.
\end{theorem}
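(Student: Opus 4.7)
The plan is to exploit the linearity of $T$ to turn a single chain witnessing $x\,\mathcal{R}\,x$ into $\epsilon$-chains between any two scalar multiples of $x$. Since $\mathcal{R}$ restricted to $CR(T)$ is an equivalence relation, it suffices to check two things: (i) every $\lambda x$ with $\lambda \in \mathbb{K}$ lies in $CR(T)$, and (ii) $\lambda x\,\mathcal{R}\,\mu x$ for all $\lambda,\mu \in \mathbb{K}$.

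For (i), given $\epsilon > 0$ and $\lambda \neq 0$, I would take an $(\epsilon/|\lambda|)$-chain $\{x = x_0, x_1, \ldots, x_n = x\}$ from $x$ to itself and observe that the scaled sequence $\{\lambda x_0, \lambda x_1, \ldots, \lambda x_n\}$ is an $\epsilon$-chain from $\lambda x$ to $\lambda x$, since
\[ \|T(\lambda x_i) - \lambda x_{i+1}\| = |\lambda|\,\|T x_i - x_{i+1}\| < \epsilon. \]
For $\lambda = 0$ the trivial chain $\{0,0\}$ suffices, because $T(0)=0$.

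For (ii), I would linearly interpolate in the scalar field. Assume $x \neq 0$ (otherwise span$[x]=\{0\}$ and there is nothing to do). Fix $\lambda, \mu \in \mathbb{K}$ and $\epsilon > 0$, and choose $N \in \mathbb{N}$ so large that $|\mu - \lambda|\,\|x\|/N < \epsilon/2$. Set $\lambda_j = \lambda + (j/N)(\mu-\lambda)$ for $j = 0, \ldots, N$. Using (i), I would pick an $(\epsilon/2)$-chain $\{\lambda_j x = y_0^j, y_1^j, \ldots, y_{n_j}^j = \lambda_j x\}$ for each $j = 0, \ldots, N-1$, and concatenate them into
\[ \{y_0^0, y_1^0, \ldots, y_{n_0-1}^0,\; \lambda_1 x,\; y_1^1, \ldots, y_{n_1-1}^1,\; \lambda_2 x,\; \ldots,\; y_{n_{N-1}-1}^{N-1},\; \mu x\}. \]
The only new step-jumps occur at the junctions $y_{n_j-1}^j \to \lambda_{j+1} x$, which I would bound by the triangle inequality:
\[ \|T y_{n_j-1}^j - \lambda_{j+1} x\| \leq \|T y_{n_j-1}^j - \lambda_j x\| + \|\lambda_j x - \lambda_{j+1} x\| < \epsilon/2 + \epsilon/2 = \epsilon. \]
Swapping the roles of $\lambda$ and $\mu$ produces the reverse chain, so $\lambda x\,\mathcal{R}\,\mu x$.

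The substantive work lies in the concatenation bookkeeping of (ii); everything else is immediate from linearity. Since $\mathbb{K}$ is $\mathbb{R}$ or $\mathbb{C}$, the linear path $\{\lambda_j\}$ between the two target scalars is always available, so the same argument works uniformly. Combining (i) and (ii) yields that every point of span$[x]$ is chain recurrent and belongs to the same $\mathcal{R}$-class as $x$, which is exactly the claim.
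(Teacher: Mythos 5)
Your proposal is correct and follows essentially the same route as the paper: scaling a chain by $\lambda$ (starting from an $(\epsilon/|\lambda|)$-chain) to prove chain recurrence of $\lambda x$, then linearly interpolating between scalars and concatenating $(\epsilon/2)$-chains to place all of span$[x]$ in one class. Your symmetric formulation between arbitrary $\lambda$ and $\mu$ is a minor streamlining that lets the reverse chain come from swapping roles, avoiding the paper's separate treatment of the case $\lambda=0$ via the scalar $\lambda'=1/\lambda$.
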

\begin{proof}
Since $x$ is chain recurrent, given $\eps> 0$ there is an $\eps$-chain \[\{x_0=x, x_1, \ldots, x_n=x\},\] that is, $\|Tx_0 - x_1 \| < \eps,\ldots,\| T x_{n-1} - x \| < \eps$. One can readily verify that if $|\lambda| \in  (0,1]$ 
then the finite sequence $\{\lambda x_0, \lambda x_1, \ldots, \lambda x_n\}$ is a finite $\eps$-chain that begins and ends in $\lambda x$. Therefore $\lambda x$ is chain recurrent for any $|\lambda| \in (0,1]$. We know that $0$ is always chain recurrent, since $T(0)=0$, therefore $\lambda x$ is chain recurrent for $|\lambda| \in [0,1]$.

The case $|\lambda| > 1$ is analogous: given $\eps > 0 $ we know that there is an $\eps/ |\lambda|$-chain starting and ending in $x$, given by $\{x_0, x_1, \ldots ,x_n\}$, it is not hard to see that the chain given by $\{\lambda x_0, \lambda x_1, \ldots , \lambda x_n\}$ is an $\eps$-chain starting and ending in $\lambda x$.



We have proved above that every point of the span$[x]$ is chain recurrent. It remains to prove that span$[x]$ belongs to just one recurrent class. Given $\lambda \in \mathbb{K}$ and $\eps>0$, we want to find an $\eps$-chain that begins in $x$ and ends in $\lambda x$. Let $k \in \mathbb{N}$ be such that $\displaystyle\dfrac{\|x-\lambda x\|}{k}<\frac{\eps}{2}$. For each $j\in\{0,\ldots, k\}$, consider 
$$
\displaystyle x^j=\left(1-\frac{j}{k}\right)x+\frac{j}{k}\lambda x \in \mbox{span}[x].
$$ 

We proved above that $x^j$ is chain recurrent. Now we can choose for each $j\in\{0,1,\ldots, k-1\}$ an $\eps/2$-chain $\{x_0^j=x^j, x_1^j,\ldots, x_{{n_j}-1}^j, x_{n_j}^j=x^j\}$. The sequence $$\{x_0^0=x, x_1^0,\ldots, x_{n_0-1}^0, x_0^1=x^1, x_1^1,\ldots, x_{n_1-1}^1,\ldots, x_0^{k-1}=x^{k-1},x^{k-1}_1,\ldots, x_{n_{k-1}-1}^{k-1}, x^k=\lambda x\}$$ is an $\eps$-chain from $x$ to $\lambda x$. Indeed, it is enough to show that $$\|Tx_{n_j-1}^j-x^{j+1}\|<\eps,\;\forall\;j\in\{0,1,\ldots, k-1\}.$$ This follows from the fact that
$$\|x^j-x^{j+1}\| = \left\|\left(1-\frac{j}{k}\right)x+\frac{j}{k}\lambda x -  \left(1-\frac{j+1}{k}\right)x-\frac{j+1}{k}\lambda x\right\| = \frac{1}{k}\|x-\lambda x\|<\frac{\eps}{2}$$ and
$$\|Tx_{n_j-1}^j-x^{j+1}\|\leq \|Tx_{n_j-1}^j - x^j\|+\|x^j-x^{j+1}\|<\dfrac{\eps}{2}+\dfrac{\eps}{2}= \eps,$$ for all $j\in\{0,\ldots k-1\}$.
We now need to do the inverse path, go from $\lambda x$ to $x$. We saw that $\lambda x$ is chain recurrent. Then, if $\lambda\neq 0$ we can apply the previous argument to create an $\eps$-chain from $y=\lambda x$ to $\lambda'y=x$ with $\lambda'=\frac{1}{\lambda}$. If $\lambda=0$, that is, $\lambda x=0$, consider ${\lambda'}\neq 0$ such that $\|{\lambda'}x\|\leq\epsilon/2$ and use the previous argument to create an $\epsilon/2$-chain, $\{x_0=\lambda'x,x_1,\ldots, x_{n-1}, x_n= x\}$ which begins in $\lambda'x$ and ends in $x$. This yields an $\eps$-chain, $\{0,x_0=\lambda'x,x_1,\ldots, x_{n}=x\}$, starting in the origin and ending in $x$. 
\end{proof}

\begin{corollary} \label{onerecurrenceclass}
If $X$ is a normed vector space and $T$ is a bounded linear operator acting on $X$, then $T$ has only one chain recurrent class.
\end{corollary}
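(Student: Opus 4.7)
The plan is to deduce this corollary directly from Theorem \ref{spanofchainrecurrent} using the fact that the zero vector is a universal ``anchor'' for chain recurrent classes in the linear setting. Concretely, I first note that $0$ is always a chain recurrent point: since $T(0)=0$, the constant sequence $\{0,0,\ldots,0\}$ is an $\epsilon$-chain from $0$ to itself for every $\epsilon>0$, so $0\,\mathcal{R}\,0$.

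Next, let $x\in CR(T)$ be an arbitrary chain recurrent point. By Theorem \ref{spanofchainrecurrent}, every point of $\mathrm{span}[x]$ is chain recurrent and the whole subspace $\mathrm{span}[x]$ lies in a single equivalence class of $\mathcal{R}$. Since $0\in\mathrm{span}[x]$, this forces $x\,\mathcal{R}\,0$. Therefore every chain recurrent point is equivalent to $0$, which means there is exactly one chain recurrent class.

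I do not expect any real obstacle here: once Theorem \ref{spanofchainrecurrent} is available, the corollary is a one-line consequence of the observation that $0$ belongs to every linear span $\mathrm{span}[x]$ and is itself always chain recurrent. The only thing to be mildly careful about is the degenerate case $x=0$, where $\mathrm{span}[x]=\{0\}$ and the statement is trivial.
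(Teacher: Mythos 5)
Your proof is correct and follows essentially the same route as the paper: both arguments observe that $0$ is always chain recurrent and then invoke Theorem \ref{spanofchainrecurrent} to conclude that every chain recurrent class contains the origin, hence all classes coincide. No gap here.
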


\noindent \textbf{Proof:} It is clear that $0$ is chain recurrent for any linear operator $T$, therefore the chain recurrent set of $T$ is non empty. Now, due to the previous result, every chain recurrent class has the origin of $X$ in common. Since these chain recurrent classes have a common point they are the same.

\qed

In view of Corollary \ref{onerecurrenceclass}, we will refer to the chain recurrent class of an operator as the chain recurrent set. This corollary says that a bounded linear operator is chain transitive if every point of the space is chain recurrent. Notice that the chain recurrent set of any operator is non-empty since the origin is always contained in this set.

Let $Y$ be a set and $f : Y \to Y$ a function. A subset $A$ of $Y$ is \textbf{invariant} for $f$ if $f(A) \subset A$. The next corollary tell us that the chain recurrent set is a closed invariant subspace.

\begin{corollary} \label{chainspace}
The chain recurrent set is a closed and invariant subspace. If $T$ is invertible, $T(CR(T))= CR(T)$.
\end{corollary}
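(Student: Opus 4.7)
My plan is to verify the four assertions of the statement—subspace structure, closedness, forward invariance $T(CR(T)) \subset CR(T)$, and equality $T(CR(T)) = CR(T)$ in the invertible case—separately, using Theorem \ref{spanofchainrecurrent} for scalar multiples and direct $\eps$-chain manipulations for everything else.

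For the subspace property, closure under scalar multiplication (and the fact that $0 \in CR(T)$) is already supplied by Theorem \ref{spanofchainrecurrent}. Closure under addition is the one nontrivial ingredient. Given $x, y \in CR(T)$ and $\eps > 0$, I would fix an $\eps/2$-chain $\{x_0 = x, x_1, \ldots, x_n = x\}$ from $x$ to $x$ and an $\eps/2$-chain $\{y_0 = y, y_1, \ldots, y_m = y\}$ from $y$ to $y$. Since each chain closes up at its starting point, it may be concatenated with itself any finite number of times, producing an $\eps/2$-chain of length any positive multiple of $n$ (respectively $m$). Aligning lengths at the common multiple $nm$ yields $\eps/2$-chains $\{a_i\}_{i=0}^{nm}$ and $\{b_i\}_{i=0}^{nm}$ from $x$ to $x$ and from $y$ to $y$, and the term-wise sum satisfies
$$\|T(a_i+b_i)-(a_{i+1}+b_{i+1})\| \leq \|Ta_i - a_{i+1}\| + \|Tb_i - b_{i+1}\| < \eps,$$
so it is an $\eps$-chain from $x+y$ to $x+y$.

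For closedness, let $x_k \in CR(T)$ converge to $x$. Given $\eps > 0$, I would choose $k$ with $\|x - x_k\| < \eps/(2(\|T\|+1))$ (to beat both $\|T\|\,\|x-x_k\| < \eps/2$ and $\|x - x_k\| < \eps/2$), pick an $\eps/2$-chain $\{y_0 = x_k, y_1, \ldots, y_{n-1}, y_n = x_k\}$, and replace the endpoints to form $\{x, y_1, \ldots, y_{n-1}, x\}$. The interior transitions are untouched, while the new boundary transitions are controlled by
$$\|Tx - y_1\| \leq \|T\|\,\|x-x_k\| + \tfrac{\eps}{2} < \eps \quad\text{and}\quad \|Ty_{n-1} - x\| \leq \tfrac{\eps}{2} + \|x_k - x\| < \eps,$$
so $x \in CR(T)$.

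For invariance, let $x \in CR(T)$. If $T = 0$ then $Tx = 0 \in CR(T)$; otherwise, given $\eps > 0$, I apply $T$ to an $(\eps/\|T\|)$-chain $\{x_0, \ldots, x_n\}$ from $x$ to $x$, and the bound $\|T(Tx_i) - Tx_{i+1}\| \leq \|T\|\,\|Tx_i - x_{i+1}\| < \eps$ shows that $\{Tx_0, \ldots, Tx_n\}$ is an $\eps$-chain from $Tx$ to $Tx$. When $T$ is invertible with bounded inverse, the symmetric argument with $T^{-1}$—using the identity $\|T(T^{-1}y_i) - T^{-1}y_{i+1}\| = \|T^{-1}(Ty_i - y_{i+1})\| \leq \|T^{-1}\|\,\|Ty_i - y_{i+1}\|$—gives $T^{-1}(CR(T)) \subset CR(T)$, which yields the reverse inclusion and hence $T(CR(T)) = CR(T)$. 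The main obstacle is the addition step, which depends on the iteration trick to align chain lengths; once that is in place, closedness and invariance both collapse to one-line estimates.
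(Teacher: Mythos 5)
Your proof is correct, and its overall skeleton --- Theorem \ref{spanofchainrecurrent} for scalar multiples plus direct $\eps$-chain surgery for addition, closedness, and invariance --- matches the paper's; the closedness argument is essentially identical to theirs. You genuinely diverge in two of the constructions. For closure under addition the paper routes both summands through the origin: it takes $\eps/2$-chains from $x$ to $0$ and from $y$ to $0$, pads the shorter one with zeroes (legitimate because $T(0)=0$), and adds term-wise, thereby showing $x+y\,\mathcal{R}\,0$; you instead keep loops based at $x$ and at $y$ and equalize their lengths by concatenating each loop with itself up to the common multiple $nm$. Both are valid: the paper's trick leans on the distinguished fixed point at the origin, while yours is the generic common-period device and produces a loop at $x+y$ directly, at the cost of longer chains. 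For invariance the paper shifts the given chain by one index and prepends/appends $Tx$, estimating only the single problematic transition, whereas you push the entire chain forward through $T$ (and, in the invertible case, backward through $T^{-1}$) after rescaling the tolerance by $\|T\|$ (resp.\ $\|T^{-1}\|$); your version is cleaner and is in fact the same mechanism the paper uses later in Lemma \ref{refereelemma}. One cosmetic point, shared with the paper's own write-up: in the closedness step, if the chosen loop at $x_k$ has length $1$, the single transition $\|Tx-x\|$ accumulates all three error terms and your constants only give $3\eps/2$; this is harmless (first double the loop, or shrink the constant to $\eps/(3(\|T\|+1))$), but worth noting.
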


\noindent \textbf{Proof: }  Theorem \ref{spanofchainrecurrent} tell us that $CR(T)$ is closed under scalar multiplication. Let $x,y \in CR(T)$, $\epsilon > 0$ and $\{x_0=x,x_1,\ldots,x_n=0\}$, $\{y_0=y,y_1,\ldots,y_m=0\}$ be two $\epsilon/2$-chains that go from $x$ to $0$ and from $y$ to $0$ respectively. We may suppose $m>n$ so $$\{x+y=x_0+y_0,x_1+y_1, \ldots , x_n + y_n, 0 + y_{n+1} , \ldots ,0 + y_{m} = 0\}$$ is an $\epsilon$-chain that connects $x+y$ to $0$. A similar idea may be used to show that one can go from zero to $x+y$ with an $\epsilon$-chain. Therefore $x+y$ is chain recurrent.

The fact that the chain recurrent set is closed and invariant is true for topological dynamical systems in general. But we include the proof of these facts for the reader's convenience.

To show that $CR(T)$ is closed consider $\{x_n\}$ a sequence in $CR(T)$ converging to a point $x$ in $X$. Given $\epsilon > 0$ choose $x_n$ such that $\|x_n - x\| < \epsilon/2$ and $\|Tx_n-Tx\|<\eps/2$ and let $\{y_0=x_n,y_1,\ldots,y_N=x_n\}$ be an $\epsilon/2$-chain that goes from $x_n$ to $x_n$. It is immediate to see that $\{x,y_1,\ldots, y_{N-1},x\}$ is an $\epsilon$-chain that goes from $x$ to $x$. Therefore, $x\in CR(T)$.

We now prove the invariance. Given $x \in CR(T)$, there is an $\epsilon/(2 \max \{\|T\|,1\})$-chain, $\{x_0=x,x_1,\ldots,x_N=x\}$, that goes from $x$ to $x$. Define the chain $\{y_0=Tx, y_1=x_2 , y_2=x_3 , \ldots, y_{N-1} = x_N =x,y_N=Tx\}$. The only difficult step is to prove that $\|Ty_0 - y_1\|$ is smaller than $\eps$. But we have that
$$\begin{array}{rcl}
\|Ty_0 - y_1\|&=&\|Ty_0 - x_2\|\\
\\
&=&\|T^2 x -Tx_1 + Tx_1 - x_2\|\\
\\
&\leq& \|T^2 x -Tx_1\| + \|Tx_1 - x_2\|\\
\\
&\leq&\|T\| \, \|T x -x_1 \| + \|Tx_1 - x_2\|\\
\\
&<&\epsilon.
\end{array}
$$ 

Suppose now that $T$ is invertible. To show that $T(CR(T))=CR(T)$ it is enough to prove that $T^{-1}(CR(T))\subset CR(T)$ since it is already proven that $T(CR(T))\subset CR(T)$. Let $x\in CR(T)$ and $\eps>0$. Then, there is an $\epsilon/(2 \max \{\|T^{-1}\|,1\})$-chain, $\{x_0=x,x_1,\ldots,x_N=x\}$, that goes from $x$ to $x$. The finite sequence $\{y_0=T^{-1}x, y_1=x_0=x , y_2=x_1 , \ldots, y_{N-1} = x_{N-2}, y_{N}=T^{-1}x\}$ is an $\eps$-chain from $T^{-1}x$ to $T^{-1}x$, since 
\[\begin{array}{rcl}
    \|T(y_{N-1})-y_N\| & = &\|T(x_{N-2})-T^{-1}(x)\|\\
    \\
     & \leq &\|T(x_{N-2})-x_{N-1}\|+\|x_{N-1}-T^{-1}(x)\|\\
     \\
     &\leq&\|T(x_{N-2})-x_{N-1}\|+\|T^{-1}\|\|T(x_{N-1})-x\|\\
     \\
     &<&\eps.
\end{array}\]

\qed

The next propositions give examples of operators that are chain transitive and operators that are not.


\begin{proposition} \label{unitaryimplieschaintransitivity}
Let $X$ be a normed vector space and $T:X \to X$ a bounded linear operator which is a surjective isometry (or equivalently, T is invertible and $\|T^{-1}\| = \|T\| = 1$), then $T$ is chain transitive. In particular, if $X$ is an inner product space, every unitary operator $T:X \to X$ is chain transitive.
\end{proposition}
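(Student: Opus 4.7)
The plan is to reduce chain transitivity to the single statement that every vector $v\in X$ is chain-related to the origin, since by Corollary \ref{onerecurrenceclass} there is a unique chain recurrent class and this reduction then forces chain transitivity. The ingredient I will use is that $T$ being a surjective isometry is equivalent to $T^{-1}$ also being a surjective isometry, so any construction I make for $T$ is available for $T^{-1}$ too.

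For a $T$-$\epsilon$-chain from $v$ to $0$, the key device is a \emph{contracted orbit}: for $\alpha\in(0,1)$ small I will set $v_k:=(1-\alpha)^k T^k(v)$. Using that $T$ is an isometry, a routine computation gives $\|Tv_k-v_{k+1}\| = \alpha(1-\alpha)^k\|v\|\le \alpha\|v\|$, so the choice $\alpha\|v\|<\epsilon$ makes every consecutive jump short. Since $\|v_k\|=(1-\alpha)^k\|v\|\to 0$, appending $0$ after a sufficiently late $v_N$ closes the chain with a final jump of size $\|v_N\|<\epsilon$.

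The main subtlety is the reverse direction, a $T$-$\epsilon$-chain from $0$ to $v$: the contraction trick naturally pushes everything towards $0$, so it does not obviously reverse to an ``outward'' chain. The clean way around it is to apply the same contracted-orbit construction to the isometry $T^{-1}$, producing a $T^{-1}$-$\epsilon$-chain $\{w_0=v,w_1,\ldots,w_M=0\}$, and then read it backwards: using $\|T\|=1$, the inequality $\|T^{-1}w_k-w_{k+1}\|<\epsilon$ is equivalent to $\|w_k-Tw_{k+1}\|<\epsilon$, which is precisely the $T$-chain condition for the reversed sequence. This yields the required $T$-chain from $0$ to $v$.

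Concatenating the two chains gives $v\mathcal{R}\,0$ for every $v\in X$; combined with Corollary \ref{onerecurrenceclass} this forces every point to lie in the unique chain class, so $T$ is chain transitive. The ``in particular'' statement is then immediate, since a unitary $T$ on an inner product space satisfies $\|Tx\|^2=\langle T^*Tx,x\rangle=\|x\|^2$ and has $T^{-1}=T^*$ also unitary, placing it inside the hypothesis.
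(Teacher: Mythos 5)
Your proof is correct, but it takes a different route from the paper's. The paper proves directly that every $x$ is chain recurrent by exhibiting a single closed $\epsilon$-chain from $x$ back to $x$: it sets $x_k=\left(1-\frac{k}{n}\right)T^k(x)+\frac{k}{n}\,T^{-n+k}(x)$, a convex interpolation between the forward orbit and a shifted backward orbit, which returns exactly to $x$ at step $n$ with each jump of size at most $2\|x\|/n$. You instead route everything through the origin: a geometrically damped forward orbit $v_k=(1-\alpha)^kT^k(v)$ gives the chain from $v$ to $0$ (your estimate $\|Tv_k-v_{k+1}\|=\alpha(1-\alpha)^k\|v\|$ is right), and for the return you run the same construction for the isometry $T^{-1}$ and reverse the resulting chain, using $\|w_k - Tw_{k+1}\|=\|T^{-1}w_k-w_{k+1}\|$ — which is exactly the mechanism of the paper's Lemma \ref{refereelemma} ($CR(T)=CR(T^{-1})$), here applied at the level of individual chains. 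Your reduction to ``$v\mathcal{R}0$ for all $v$'' via Corollary \ref{onerecurrenceclass} (or just by concatenating chains through $0$) is sound. The paper's loop is more self-contained and avoids invertibility of the chain construction; your version is more modular, making explicit that the outward chain is just the inward chain for $T^{-1}$ read backwards, at the cost of needing the reversal observation. Both arguments use the isometry hypothesis in the same essential way, to keep the norms of all iterates equal to $\|v\|$.
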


\noindent \textbf{Proof:} Let $x\in X$ and $\epsilon > 0$. We will show that $x$ is a chain recurrent point. Choose $n \in \mathbb{N}$ such that $\dfrac{\|x\|}{n} < \dfrac{\epsilon}{2}$. Define the sequence
$$\begin{array}{rcl}
x_0 & = & x,\\
\\
x_1& = &T(x) + \dfrac{T^{-n+1}(x)}{n} - \dfrac{T(x)}{n},\\
\\
x_2 & = &T(x_1) + \dfrac{T^{-n+2}(x)}{n} - \dfrac{T^2(x)}{n} \;\;=\;\; T^2(x) + \dfrac{2T^{-n+2}(x)}{n} - \dfrac{2T^2(x)}{n},\\
&\vdots&\\
x_k &= &T(x_{k-1}) + \dfrac{T^{-n+k}(x)}{n} - \dfrac{T^k(x)}{n} \;\;=\;\; T^k(x) + \dfrac{kT^{-n+k}(x)}{n}- \dfrac{kT^k(x)}{n},\\
\end{array}$$

for every $1 \leq k \leq n$. Notice that $x_n = x$, and that 
$$
\|x_k - T(x_{k-1})\| = \left|\left| \dfrac{T^{-n+k}(x)}{n} - \dfrac{T^k(x)}{n}\right|\right| \leq \left| \left| \dfrac{T^{-n+k}(x)}{n} \right| \right| +\left| \left| \dfrac{T^{k}(x)}{n} \right| \right| < \epsilon
$$
for $1 \leq k \leq n$. Therefore, $\{x_0,x_1,\ldots,x_n\}$ is an $\eps$-chain which begins and ends in $x$. This means that $x$ is a chain recurrent point.

\qed

If $X$ is a normed vector space, a map $T: X \to X$ is \textbf{recurrent} if, for every $x \in X$, and for every open set $U \subset X$, with $x \in U$, there is some $n \in \mathbb{N} \setminus \{0\}$, such that $T^n(U) \cap U \neq \emptyset$. Clearly any recurrent operator is also chain transitive, but there are operators that are chain transitive and not recurrent. Indeed let $X = \ell_p(\mathbb{Z})$ for $1 \leq p \leq \infty$ and $T: X \to X$ be the shift $T(e_i) = e_{i-1}$. Then by the above proposition $T$ is chain transitive. For $x = e_0$ there is no $n>0$ such that $T^n(B(x,1/2)) \cap B(x,1/2) \neq \emptyset$, therefore $T$ is not recurrent.






Let $X$ be a normed vector space. We say that a bounded linear operator $T: X \to X$ is a \textbf{proper contraction} if $\|T\| < 1$ and a \textbf{contraction} if $\|T\| \leq 1$. We say that $T$ is a \textbf{proper dilation} if $T$ is invertible and $\|T^{-1}\|<1$ and a \textbf{dilatation} if $\|T^{-1}\| \leq 1$. An operator $T$ on a Banach space $B$ is said to be \textbf{hyperbolic} \cite{hyperbolic} if there is a splitting
$$
B = B_s \oplus B_u, \hspace{1cm} T = T_s \oplus T_u,
$$
where $B_s$ and $B_u$ are closed $T$-invariant linear subspaces of $B$, $T_s = T |_{B_{s}}$ is a proper contraction, and $T_u = T |_{B_u}$ is a proper dilation. It is common in the literature to assume that hyperbolic operators are invertible, but in this manuscript such assumption is not needed and therefore we do not assume it.

\begin{proposition}\label{CRtrivial}
Let $X$ be a normed vector space and $T : X \to X$ a linear operator. If $T$ is a proper contraction, then the chain recurrent set of $T$ is the origin.
\end{proposition}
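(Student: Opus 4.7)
The plan is to show directly that no nonzero vector can be chain recurrent by exploiting the strict contraction $\|T\| < 1$. Roughly, along any $\epsilon$-chain the norms must decay geometrically up to an additive error controlled by $\epsilon$, so a loop returning to $x$ forces $\|x\|$ to be small compared with $\epsilon$; choosing $\epsilon$ small compared with $\|x\|$ yields a contradiction.

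Concretely, set $r = \|T\| < 1$ and fix $x \in X$ with $x \neq 0$. For any $\epsilon$-chain $\{x_0 = x, x_1, \ldots, x_n\}$ the triangle inequality gives
\[
\|x_{i+1}\| \leq \|T x_i\| + \epsilon \leq r\,\|x_i\| + \epsilon
\]
for each $0 \leq i < n$. Iterating this inequality yields
\[
\|x_n\| \leq r^n \|x\| + \epsilon \sum_{j=0}^{n-1} r^j \leq r^n \|x\| + \frac{\epsilon}{1-r}.
\]
If the chain is a loop at $x$, i.e.\ $x_n = x$ with $n \geq 1$, then $\|x\| \leq r^n\|x\| + \epsilon/(1-r)$, so
\[
(1-r)\|x\| \leq (1-r^n)\|x\| \leq \frac{\epsilon}{1-r},
\]
where I use $r^n \leq r$ for $n \geq 1$. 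Equivalently, any $\epsilon$-chain from $x$ to $x$ forces $\epsilon \geq (1-r)^2 \|x\|$.

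Now I would choose $\epsilon_0 := (1-r)^2 \|x\|/2 > 0$; by the inequality above, no $\epsilon_0$-chain loops at $x$, so $x \notin CR(T)$. Since $0$ is always chain recurrent (as noted in Corollary~\ref{onerecurrenceclass}), this proves $CR(T) = \{0\}$. There is no real obstacle here; the only subtlety is checking that the bound $(1-r^n)\|x\| \geq (1-r)\|x\|$ is valid for every chain length $n \geq 1$, which is immediate since $0 \leq r < 1$.
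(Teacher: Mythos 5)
Your proof is correct and is essentially the same argument as the paper's: both establish the bound $\|x_n\| \leq r^n\|x\| + \epsilon/(1-r)$ along any $\epsilon$-chain (you by iterating the one-step recursion, the paper by a telescoping sum) and then pick $\epsilon$ small relative to $(1-r)\|x\|$ to rule out a loop at any $x \neq 0$. Your explicit choice $\epsilon_0 = (1-r)^2\|x\|/2$ is in fact a little cleaner than the paper's, which introduces an auxiliary parameter $\delta$ for the same purpose.
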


\noindent \textbf{Proof: } Recall that $0$ is always in the chain recurrent set. Let $x\neq 0$, define $\epsilon = (\|x\| - \|Tx\|-\delta)(1-\|T\|)$ and choose a small enough $\delta >0$ such that $\eps > 0$. Consider $\{x_0=x,x_1,\ldots,x_N\}$ an $\epsilon$-chain that starts in $x$. Thus, we have that 
$$\begin{array}{rcl}
\|x_{N}\| &=&
\|x_{N} +  (Tx_{N-1} - Tx_{N-1}) +  \cdots + (T^{N-1}x_1 - T^{N-1}x_1) + (T^{N}x_0 - T^{N}x_0) \|\\ 
\\
&=&\|(x_{N} - Tx_{N-1}) + T(x_{N-1} - Tx_{N-2}) + \cdots + T^{N-1}(x_1 - Tx) + T^{N}x\|\\ 
\\
&\leq & \|(x_{N} - Tx_{N-1})\| + \|T\| \, \|x_{N-1} - Tx_{N-2}\| + \cdots + \|T^{N-1}\| \, \|x_1 - Tx\| + \|T^{N}x\|\\
\\
&\leq&\|T^N x\| + \dfrac{\epsilon}{1-\|T\|}\\
\\
&=&\|x\|-(\|Tx\|-\|T^Nx\|+\delta)\\
\\
&<& \|x\|,
\end{array}
$$
since $\|T^Nx\|\leq\|Tx\|$. Therefore there is no $\epsilon$-chain that starts in $x$ and finishes in $x$. 

\qed

\begin{lemma} \label{refereelemma}
If $T$ is an invertible operator on the normed space $X$, then $CR(T)=CR(T^{-1})$.
\end{lemma}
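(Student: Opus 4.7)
The plan is to prove both inclusions by the symmetric argument that reversing an $\epsilon$-chain for $T$ produces a chain for $T^{-1}$, with the chain tolerance inflated by at most a factor of $\|T^{-1}\|$. Since $(T^{-1})^{-1}=T$, once I prove $CR(T)\subset CR(T^{-1})$ the reverse inclusion follows by applying the same argument with $T$ and $T^{-1}$ interchanged.

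For the concrete construction, fix $x\in CR(T)$ and $\epsilon>0$. Set $\delta:=\epsilon/\max\{\|T^{-1}\|,1\}$ and invoke chain recurrence to obtain a $\delta$-chain $\{x_0=x,x_1,\ldots,x_n=x\}$ for $T$. Define the reversed sequence $y_j:=x_{n-j}$ for $j=0,1,\ldots,n$, which again starts and ends at $x$. To check that this is an $\epsilon$-chain for $T^{-1}$, I would apply $T^{-1}$ to the defining inequality $\|Tx_i-x_{i+1}\|<\delta$ and bound
\[
\|T^{-1}y_j-y_{j+1}\|=\|T^{-1}x_{n-j}-x_{n-j-1}\|\leq \|T^{-1}\|\,\|Tx_{n-j-1}-x_{n-j}\|<\|T^{-1}\|\delta\leq\epsilon.
\]
Hence $x\in CR(T^{-1})$, and the reverse inclusion is identical with the roles swapped.

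There is essentially no obstacle here beyond the bookkeeping of the factor $\|T^{-1}\|$, which is why the initial $\delta$ must be chosen smaller than $\epsilon$ by this multiplicative amount (the $\max$ with $1$ is only there to cover the trivial case $\|T^{-1}\|<1$ cleanly). The argument does not use linearity in any essential way and is simply the observation that conjugating a pseudo-orbit by the inverse dynamics reverses direction while scaling errors controllably.
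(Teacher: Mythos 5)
Your proof is correct and takes essentially the same route as the paper: reverse an $\epsilon/\|T^{-1}\|$-chain for $T$ and use $\|T^{-1}x_{n-j}-x_{n-j-1}\|=\|T^{-1}(x_{n-j}-Tx_{n-j-1})\|\leq\|T^{-1}\|\,\|Tx_{n-j-1}-x_{n-j}\|$, then interchange $T$ and $T^{-1}$ for the opposite inclusion. The only cosmetic difference is your $\max\{\|T^{-1}\|,1\}$ in place of the paper's $\|T^{-1}\|$, which is harmless but not needed.
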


\noindent \textbf{Proof:}
Let $x\in CR(T)$, $\epsilon>0$ and $\{x_0=x,\ldots, x_n=x\}$ an $\epsilon/\|T^{-1}\|$-chain for $T$. It is then straightforward that $\{y_0=x, y_1=x_{n-1},\ldots, y_{n-1}=x_1,y_n=x\}$ is an $\epsilon$-chain for $T^{-1}$ from $x$ to $x$. This means that $CR(T)\subset CR(T^{-1})$ and interchanging the roles of $T$ and $T^{-1}$ we get the conclusion.
\qed

\begin{corollary} \label{properdilation}
If $T$ is a proper dilation, then $CR(T)=\{0\}$.
\end{corollary}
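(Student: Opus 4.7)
The plan is to exploit the two immediately preceding results, which together handle this corollary with essentially no extra work. Since $T$ is a proper dilation, by definition $T$ is invertible and $\|T^{-1}\| < 1$, so $T^{-1}$ is a proper contraction.

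First I would apply Proposition \ref{CRtrivial} to $T^{-1}$: because $T^{-1}$ is a proper contraction on the normed space $X$, its chain recurrent set satisfies $CR(T^{-1}) = \{0\}$.

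Next I would invoke Lemma \ref{refereelemma}, which states that for any invertible operator on a normed space one has $CR(T) = CR(T^{-1})$. Combining these two facts gives $CR(T) = CR(T^{-1}) = \{0\}$, which is exactly the claim.

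There is no real obstacle here; the work was already done in Proposition \ref{CRtrivial} and Lemma \ref{refereelemma}. The only point worth stating explicitly in the proof is the definitional observation that the notions of proper dilation for $T$ and proper contraction for $T^{-1}$ match up, so that Proposition \ref{CRtrivial} applies to $T^{-1}$ without modification.
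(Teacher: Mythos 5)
Your proof is correct and is exactly the argument the paper gives: apply Proposition \ref{CRtrivial} to the proper contraction $T^{-1}$ and then transfer the conclusion back to $T$ via Lemma \ref{refereelemma}. Nothing further is needed.
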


\noindent \textbf{Proof}
It follows immediately from the above lemma and Proposition \ref{CRtrivial}.

\qed

The next theorem is interesting in itself and will provide two corollaries. Corollary \ref{decompositionchainrecurrentset} will aid us to prove that $CR(T)=\{0\}$ when $T$ is hyperbolic. In \cite{fabricio} it is proved something weaker, that $T$ is not chain recurrent when $T$ is hyperbolic. Corollary \ref{chainrecurrentselfadjoint} will aid in the proof of Theorem B.




\begin{theorem}\label{CRprojection}
Let $T$ be a bounded operator on a Banach space $X$ such that $X=M\oplus N$ where $M,N$ are $T$-invariant closed subspaces of $X$. Then $CR(T)\cap M=CR(T|_M)$. 
\end{theorem}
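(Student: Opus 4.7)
My plan is to establish the two inclusions separately, with the nontrivial content concentrated in the harder direction. The easy inclusion $CR(T|_M) \subseteq CR(T) \cap M$ is essentially automatic: if $x \in CR(T|_M)$ then $x \in M$, and for any $\epsilon > 0$ an $\epsilon$-chain for $T|_M$ inside $M$ from $x$ to $x$ is, by inclusion, also an $\epsilon$-chain for $T$ in $X$ from $x$ to $x$, so $x \in CR(T)$.

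For the reverse inclusion $CR(T) \cap M \subseteq CR(T|_M)$, I will project pseudo-orbits from $X$ to $M$. The key tool I will invoke is that since $X$ is a Banach space and $M,N$ are closed with $X = M \oplus N$, the linear projection $\pi_M : X \to M$ onto $M$ along $N$ has closed graph and is therefore bounded (closed graph theorem); let $C = \|\pi_M\|$. Moreover, because both $M$ and $N$ are $T$-invariant, the decomposition $T(m+n) = T(m) + T(n)$ with $T(m) \in M$, $T(n) \in N$ shows $\pi_M \circ T = T \circ \pi_M$. Now suppose $x \in CR(T) \cap M$ and fix $\eta > 0$. Choose an $(\eta/C)$-chain $\{x_0 = x, x_1, \ldots, x_n = x\}$ in $X$ for $T$ and set $m_i := \pi_M(x_i)$. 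Since $x \in M$, $m_0 = m_n = x$, and for each $i$,
\[
\|T(m_i) - m_{i+1}\| = \|\pi_M(T(x_i)) - \pi_M(x_{i+1})\| \leq C \cdot \|T(x_i) - x_{i+1}\| < \eta,
\]
so $\{m_0, \ldots, m_n\}$ is an $\eta$-chain in $M$ for $T|_M$ from $x$ to $x$.

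The main subtlety is the boundedness of $\pi_M$, which is where the Banach space assumption is actually used; without it one could not control the norm of the projected errors. A secondary point to flag is that one needs both $M$ and $N$ to be $T$-invariant to ensure $T$ commutes with $\pi_M$ — invariance of $M$ alone would not suffice, since without invariance of $N$ the image $T(x_i)$ could pick up extra $M$-components from the $N$-part of $x_i$. Beyond these observations the argument is mechanical, so I expect no further difficulty.
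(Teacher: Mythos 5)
Your proof is correct and follows essentially the same route as the paper: the paper also projects an $\epsilon/\alpha$-chain onto its $M$-components, where $\alpha$ is the norm bound on the projection obtained (via a cited result of Brezis) from the closedness of $M$ and $N$ in the Banach space $X$ --- the same fact you derive from the closed graph theorem. Your explicit remarks on why boundedness of $\pi_M$ and $T$-invariance of \emph{both} summands are needed match the (implicit) use of these facts in the paper's argument.
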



\noindent \textbf{Proof:} It is clear that $CR(T|_{M})\subset CR(T) \cap M$. We will show that $CR(T)\cap M\subset CR(T|_{M})$. By Theorem 2.10 of \cite{Brezis}, there is $\alpha>0$ such that \begin{equation}\label{brezisequation}\|m\|\leq\alpha\|m+n\| \mbox{ and }\|n\|\leq\alpha\|m+n\|\end{equation} whenever $m\in M$ and $n\in N$. Given $x\in CR(T)\cap M$ and $\eps>0$ there is an $\eps/\alpha$-chain, $\{x,x_1,x_2,\ldots,x_{n-1},x\}$, from $x$ to $x$. For each $i\in\{1,2,\ldots,n-1\}$ there are $z_i\in M$ and $w_i\in N$ such that $x_i=z_i+w_i$. We shall show that $\{x,z_1,z_2,\ldots,z_{n-1},x\}$ is an $\eps$-chain in $M$ from $x$ to $x$, which will complete the proof. Since $M$ and $N$ are $T$-invariant and $X=M\oplus N$, by (\ref{brezisequation}) we have that
\[\begin{array}{l}
\|Tx-z_1\|\leq \alpha\|Tx-(z_1+w_1)\|<\alpha\dfrac{\eps}{\alpha}=\eps,\\
\\
\|Tz_i-z_{i+1}\|\leq \alpha\|T(z_i+w_i)-(z_{i+1}+w_{i+1})\|<\alpha\dfrac{\eps}{\alpha}=\eps,\;\;\forall\; i\in\{1,2,\ldots,n-2\}, and\\
\\
\|Tz_{n-1}-x\|\leq \alpha\|T(z_{n-1}+w_{n-1})-x\|<\alpha\dfrac{\eps}{\alpha}=\eps.
\end{array}\] This guarantees that $\{x,z_1,z_2,\ldots,z_{n-1},x\}$ is an $\eps$-chain in $M$ beginning and ending in $x$, that is, $x\in CR(T|_{M})$.

\qed

\begin{corollary}\label{decompositionchainrecurrentset}
Let $T$ be a bounded operator on a Banach space $X$. Suppose that $X = M \oplus N$, where $M$ and $N$ are closed $T$-invariant subspaces of $X$. Then $CR(T)=CR(T|_M)\oplus CR(T|_N)$.
\end{corollary}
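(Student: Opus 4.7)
The plan is to reduce the corollary to Theorem \ref{CRprojection} plus a ``chain concatenation'' argument. First observe that by Theorem \ref{CRprojection} applied to both summands, $CR(T|_M)=CR(T)\cap M$ and $CR(T|_N)=CR(T)\cap N$. Hence it suffices to establish the set equality
\[
CR(T)=\bigl(CR(T)\cap M\bigr)\oplus\bigl(CR(T)\cap N\bigr).
\]
The uniqueness of the decomposition $X=M\oplus N$ means that the right-hand side is indeed a direct sum inside $X$, so the containment $\supset$ and $\subset$ amount to showing that being chain recurrent for $T$ decouples across the two invariant summands.

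For the $\subset$ direction, let $z\in CR(T)$ and write uniquely $z=x+y$ with $x\in M$, $y\in N$. The argument is essentially a reprise of the proof of Theorem \ref{CRprojection}: given $\eps>0$ I would fix the constant $\alpha>0$ coming from (\ref{brezisequation}), take an $\eps/\alpha$-chain $\{z_0=z,z_1,\ldots,z_n=z\}$ for $T$, and split each $z_i=u_i+v_i$ with $u_i\in M$, $v_i\in N$. Invariance of $M$ and $N$ under $T$ makes $Tz_i-z_{i+1}=(Tu_i-u_{i+1})+(Tv_i-v_{i+1})$ the $M\oplus N$ decomposition of $Tz_i-z_{i+1}$, so (\ref{brezisequation}) yields $\|Tu_i-u_{i+1}\|<\eps$ and $\|Tv_i-v_{i+1}\|<\eps$. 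Hence $\{u_0=x,\ldots,u_n=x\}$ is an $\eps$-chain for $T|_M$ from $x$ to $x$ and analogously for the $v_i$'s, giving $x\in CR(T|_M)$ and $y\in CR(T|_N)$.

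For the $\supset$ direction, suppose $x\in CR(T|_M)$ and $y\in CR(T|_N)$; I want to produce, for each $\eps>0$, an $\eps$-chain for $T$ from $x+y$ to $x+y$. Take an $\eps/2$-chain $\{x_0=x,\ldots,x_n=x\}$ in $M$ and an $\eps/2$-chain $\{y_0=y,\ldots,y_m=y\}$ in $N$. The only nuisance is that the two chains may have different lengths, but since concatenating a periodic chain with itself remains an $\eps/2$-chain, I can lengthen each to a common length $\ell=\mathrm{lcm}(n,m)$ (or simply $nm$) to obtain $\eps/2$-chains $\{\tilde x_0,\ldots,\tilde x_\ell\}$ in $M$ and $\{\tilde y_0,\ldots,\tilde y_\ell\}$ in $N$ both starting and ending at the respective points. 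The coordinate-wise sum $\{\tilde x_i+\tilde y_i\}_{i=0}^{\ell}$ is then a chain for $T$ from $x+y$ to $x+y$ whose error at each step is bounded by $\|T\tilde x_i-\tilde x_{i+1}\|+\|T\tilde y_i-\tilde y_{i+1}\|<\eps$.

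The only mildly delicate step is the chain-length matching in the second paragraph; once one observes that $\eps$-chains are closed under concatenation and that the error additively decomposes along the direct sum, both inclusions are immediate. Combining the two inclusions gives $CR(T)=CR(T|_M)\oplus CR(T|_N)$, which is the corollary.
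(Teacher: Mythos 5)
Your proof is correct and follows essentially the same route as the paper: the forward inclusion is the componentwise chain-splitting argument of Theorem \ref{CRprojection} applied to an $\eps/\alpha$-chain, and the reverse inclusion amounts to closure of $CR(T)$ under addition, after which Theorem \ref{CRprojection} identifies $CR(T)\cap M$ with $CR(T|_M)$ and likewise for $N$. The only cosmetic difference is in the reverse inclusion, which the paper gets for free from Corollary \ref{chainspace} (where additive closure is proved by routing both chains through $0$ and padding the shorter one with zeroes), whereas you reprove it directly by repeating each periodic chain up to a common length and summing coordinatewise; both are valid.
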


\noindent \textbf{Proof:} Proposition \ref{chainspace} and the fact that $M$ and $N$ are closed and invariant subspaces of $X$ give us that $CR(T) \cap M$ and $CR(T) \cap N$ are closed invariant subspaces of $X$ and therefore of $CR(T)$. We know that every element $x \in CR(T)$ can be written in a unique manner as $x=m+n$, with $m \in M$ and $n \in N$. Following a similar reasoning as in the proof of Theorem \ref{CRprojection}, we have that both $m$ and $n$ are chain recurrent. Therefore $m \in CR(T) \cap M$ and $n \in CR(T) \cap N$, since $x$ is arbitrary, this implies that
\begin{equation} \label{projecaodoCR}
CR(T) = (CR(T) \cap M) \oplus (CR(T) \cap N)   . 
\end{equation}
By Theorem \ref{CRprojection} the above expression may be written as
$$
CR(T) = CR(T|_M) \oplus CR(T|_N).
$$

\qed





\begin{corollary}\label{chainrecurrentselfadjoint}
Let $T$ be a self-adjoint bounded operator on a Hilbert space $X$. Then the chain  recurrent set of $T|_{CR(T)}$ coincides with $CR(T)$. In particular, $T|_{CR(T)}$ is chain transitive.
\end{corollary}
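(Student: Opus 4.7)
The plan is to leverage self-adjointness to produce an orthogonal decomposition of $X$ into $T$-invariant pieces, and then apply the decomposition already established in Corollary \ref{decompositionchainrecurrentset}. Let $M = CR(T)$, which by Corollary \ref{chainspace} is a closed $T$-invariant subspace of the Hilbert space $X$. Since $T$ is self-adjoint, the orthogonal complement $N = M^{\perp}$ is also $T$-invariant: for $n \in N$ and $m \in M$ we have $\langle Tn, m\rangle = \langle n, Tm\rangle = 0$ because $Tm \in M$. Thus $X = M \oplus N$ is a direct sum of closed $T$-invariant subspaces, which is precisely the hypothesis of Corollary \ref{decompositionchainrecurrentset}.

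Applying that corollary yields
\[
CR(T) = CR(T|_M) \oplus CR(T|_N).
\]
The key observation is then that $CR(T|_N)$ must be trivial. Indeed, any $\epsilon$-chain inside $N$ is automatically an $\epsilon$-chain in $X$, so $CR(T|_N) \subset CR(T) = M$; combined with $CR(T|_N) \subset N$ this forces $CR(T|_N) \subset M \cap N = \{0\}$. Hence $CR(T|_N) = \{0\}$ and the decomposition above collapses to $CR(T) = CR(T|_M) = CR(T|_{CR(T)})$, which is the claimed equality.

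For the final sentence, observe that the equality $CR(T|_{CR(T)}) = CR(T)$ says precisely that every point of the ambient space $CR(T)$ on which $T|_{CR(T)}$ acts is chain recurrent. By Corollary \ref{onerecurrenceclass} a bounded linear operator whose chain recurrent set is the whole space has a single chain recurrence class, and is therefore chain transitive; applying this to the operator $T|_{CR(T)}$ on the Banach space $CR(T)$ gives the conclusion.

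I do not foresee a serious obstacle: the only nontrivial ingredient is the self-adjointness, which is used exactly once to ensure that $CR(T)^{\perp}$ is $T$-invariant so that Corollary \ref{decompositionchainrecurrentset} applies. Everything else is formal manipulation with the inclusions of chain recurrent sets under restriction.
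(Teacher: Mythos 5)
Your proof is correct and follows essentially the same route as the paper: both hinge on the orthogonal decomposition $X = CR(T)\oplus CR(T)^{\perp}$, with self-adjointness giving $T$-invariance of the complement. The only cosmetic difference is that the paper applies Theorem \ref{CRprojection} directly with $M=CR(T)$ (yielding $CR(T|_{CR(T)})=CR(T)\cap CR(T)=CR(T)$ at once), whereas you route through Corollary \ref{decompositionchainrecurrentset} and then check that the $CR(T)^{\perp}$-component vanishes.
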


\noindent \textbf{Proof:} Since $X$ is a Hilbert space and $CR(T)$ is a closed space, then $X=CR(T)\oplus (CR(T))^\perp$. We have already seen that $CR(T)$ is invariant for $T$ which implies that $(CR(T))^\perp$ is invariant for $T^*=T$. By Theorem \ref{CRprojection} $CR(T|_{CR(T)}) = CR(T)$.

\qed

The following results are immediate consequences of Corollary \ref{decompositionchainrecurrentset}.


\begin{corollary}
Let $T$ be a hyperbolic operator on a Banach space $X$ then $CR(T) = \{0\}$.
\end{corollary}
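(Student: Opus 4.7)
The plan is to combine three previously established results: the decomposition of the chain recurrent set under an invariant direct sum splitting (Corollary \ref{decompositionchainrecurrentset}), the triviality of $CR(T)$ for proper contractions (Proposition \ref{CRtrivial}), and the triviality of $CR(T)$ for proper dilations (Corollary \ref{properdilation}).

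Concretely, I would start by unpacking the hyperbolicity hypothesis: by definition there is a splitting $X = B_s \oplus B_u$ into closed $T$-invariant subspaces such that $T_s := T|_{B_s}$ is a proper contraction and $T_u := T|_{B_u}$ is a proper dilation. Since both $B_s$ and $B_u$ are closed and $T$-invariant and their direct sum is the whole space, Corollary \ref{decompositionchainrecurrentset} applies and yields
\[
CR(T) = CR(T_s) \oplus CR(T_u).
\]
Then I would invoke Proposition \ref{CRtrivial} on $T_s$ to obtain $CR(T_s) = \{0\}$, and Corollary \ref{properdilation} on $T_u$ to obtain $CR(T_u) = \{0\}$. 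Plugging these into the decomposition gives $CR(T) = \{0\}$.

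There is essentially no obstacle here: the whole statement is structured as a clean corollary of the decomposition theorem, and the hyperbolic splitting precisely feeds the two specialized triviality results (one for contractions, one for dilations). The only thing to be slightly careful about is checking that the hypotheses of Corollary \ref{decompositionchainrecurrentset} are met, namely the closedness and $T$-invariance of $B_s$ and $B_u$, both of which are built into the definition of a hyperbolic operator used in the paper. So the proof will be a two or three line citation chain rather than a calculation.
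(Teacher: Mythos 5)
Your proposal is correct and follows exactly the same route as the paper, which proves this corollary by citing Corollary \ref{decompositionchainrecurrentset}, Proposition \ref{CRtrivial}, and Corollary \ref{properdilation} applied to the hyperbolic splitting $X = B_s \oplus B_u$. Your write-up simply makes explicit the citation chain that the paper leaves implicit.
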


\noindent \textbf{Proof:} It follows immediately from the corollaries  \ref{decompositionchainrecurrentset}, \ref{properdilation} and proposition \ref{CRtrivial}.

\qed

\begin{corollary}\label{CRforsubspaces}
Let $T$ be an operator on a Banach space $X$. Suppose that $X = M \oplus N$, where $M$ and $N$ are closed $T$-invariant subspaces of $X$. Then $T$ is chain transitive if, and only if, $T|_M$ and $T|_N$ are both chain transitive.
\end{corollary}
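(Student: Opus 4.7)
The plan is to reduce this to the identity $CR(T) = CR(T|_M) \oplus CR(T|_N)$ established in Corollary \ref{decompositionchainrecurrentset}, combined with the characterization that a bounded linear operator $S$ on a normed space $Y$ is chain transitive if and only if $CR(S) = Y$. This characterization is given for free by Corollary \ref{onerecurrenceclass}: since there is only one chain recurrence class, chain transitivity (every pair of points being $\mathcal{R}$-related) is the same as saying every point is chain recurrent, which is the same as $CR(S) = Y$.

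For the easy direction, assume $T|_M$ and $T|_N$ are chain transitive. Then $CR(T|_M) = M$ and $CR(T|_N) = N$, so Corollary \ref{decompositionchainrecurrentset} immediately gives $CR(T) = M \oplus N = X$, and therefore $T$ is chain transitive.

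For the converse, assume $T$ is chain transitive, so $CR(T) = X$. By Corollary \ref{decompositionchainrecurrentset}, $X = CR(T|_M) \oplus CR(T|_N)$. Since $CR(T|_M) \subseteq M$ and $CR(T|_N) \subseteq N$, I would invoke uniqueness of the decomposition $X = M \oplus N$: every $x \in X$ has a unique representation $x = m+n$ with $m \in M$, $n \in N$, and the hypothesis forces this $m$ to lie in $CR(T|_M)$ and this $n$ in $CR(T|_N)$. Taking arbitrary $m \in M$ (with $n = 0$) shows $M \subseteq CR(T|_M)$, and similarly $N \subseteq CR(T|_N)$. Both restrictions are therefore chain transitive.

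There is essentially no obstacle here; the corollary is a direct bookkeeping consequence of the decomposition already proved, and the only point requiring any care is the observation that $CR(T|_M) \subseteq M$ and $CR(T|_N) \subseteq N$ combined with uniqueness of the direct sum decomposition forces equality.
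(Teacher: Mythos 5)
Your proof is correct and follows exactly the route the paper intends: the paper gives no explicit argument and simply calls this an immediate consequence of Corollary \ref{decompositionchainrecurrentset}, which is precisely the decomposition $CR(T)=CR(T|_M)\oplus CR(T|_N)$ you combine with the fact that chain transitivity means the chain recurrent set is the whole space. (For the converse one can also shortcut via Theorem \ref{CRprojection}, since $CR(T)=X$ gives $CR(T|_M)=CR(T)\cap M=M$ directly, but your uniqueness-of-decomposition argument is equally valid.)
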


The following example shows that $CR(T)$ can be non-trivial.

\begin{example} \label{refereeexample}
Let $X$ be a non-trivial normed space, $T:X\rightarrow X$ a proper contraction and $I:X\rightarrow X$ the identity operator. The operator
\[T\times I:X\times X\rightarrow X\times X,\] where $X\times X$ is endowed with any of the typical product norms, satisfies that $CR(T\times I)=\{0\}\times X$.
\end{example}

The next result, which is used in the last section of this paper, asserts that chain transitivity is preserved under Cartesian product. This is an obvious consequence of Corollary \ref{decompositionchainrecurrentset} when one assumes that the spaces are Banach.

\begin{proposition} \label{multiplechainrecurrence}
Let $T_1, T_2, ..., T_k$ be bounded chain transitive operators on normed vector spaces $X_1, X_2,..., X_k$ respectively. Then the product $T_1 \times ... \times T_k :X_1\times  ... \times X_k \rightarrow X_1 \times ... \times X_k$ is chain transitive.
\end{proposition}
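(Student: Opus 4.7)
The plan is to reduce the problem to constructing coordinatewise $\epsilon$-chains of a common length, and then use $0$ as a hub to synchronize those lengths. I would first equip $X_1\times\cdots\times X_k$ with the max norm $\|(v_1,\ldots,v_k)\|_\infty=\max_i\|v_i\|$; since all usual product norms on a finite product of normed spaces are equivalent, chain transitivity in one such norm implies it in the others (an $\epsilon/C$-chain in one norm is an $\epsilon$-chain in an equivalent one). In the max norm a finite sequence $\{z^0,\ldots,z^N\}$ in the product is an $\epsilon$-chain for $T_1\times\cdots\times T_k$ precisely when, for each coordinate $j$, the projected sequence $\{z_j^0,\ldots,z_j^N\}$ is an $\epsilon$-chain for $T_j$. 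So, given $(x_1,\ldots,x_k)$ and $(y_1,\ldots,y_k)$ in the product and $\epsilon>0$, the task reduces to exhibiting, for each $j$, an $\epsilon$-chain in $X_j$ from $x_j$ to $y_j$ whose length does not depend on $j$.

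The key ingredient is that every bounded linear operator fixes $0$, so the constant sequence $\{0,0,\ldots,0\}$ of any length is trivially an $\epsilon$-chain from $0$ to $0$ for each $T_j$. Combined with the chain transitivity of each $T_j$ (recalling that by Corollary \ref{onerecurrenceclass} every point lies in the same chain class as $0$), I would pick, for every coordinate $j$, an $\epsilon$-chain $\{u_0^j=x_j,u_1^j,\ldots,u_{a_j}^j=0\}$ from $x_j$ to $0$ and an $\epsilon$-chain $\{v_0^j=0,v_1^j,\ldots,v_{b_j}^j=y_j\}$ from $0$ to $y_j$. Setting $N=\max_j(a_j+b_j)$, I would splice these two chains for each $j$ and insert $N-a_j-b_j$ additional copies of $0$ between them, producing a length-$N$ chain from $x_j$ to $y_j$; every inserted transition reads $\|T_j(0)-0\|=0<\epsilon$, so all the chain inequalities are preserved.

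Assembling the resulting padded coordinate chains into a single sequence in the product yields a length-$N$ $\epsilon$-chain from $(x_1,\ldots,x_k)$ to $(y_1,\ldots,y_k)$; swapping the roles of the endpoints gives a chain back, so the two points are chain-equivalent. Since the pair of points was arbitrary, $T_1\times\cdots\times T_k$ is chain transitive. The only step that looked like it could be an obstacle is synchronizing the a priori unrelated chain lengths $a_j+b_j$ across the different coordinates, but this collapses to a triviality once every coordinate chain is routed through the common fixed point $0$, at which one may pad for free.
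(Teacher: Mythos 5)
Your proof is correct, and it rests on the same two pillars as the paper's argument: every point is chain-equivalent to the origin (Corollary \ref{onerecurrenceclass}), and $T(0)=0$ lets one pad a chain with copies of $0$ at no cost. The mechanics differ, however. The paper fixes a typical product norm, proves the case $k=2$, and treats the coordinates \emph{sequentially}: to connect $(x,y)$ to $(0,0)$ it runs an $\eps$-chain $x\to 0$ in the first coordinate while the second coordinate follows its genuine orbit $y, T_2(y),\ldots,T_2^n(y)$ (a true orbit segment being trivially an $\eps$-chain), and only afterwards brings the second coordinate down to $0$; the return trip is handled by padding the shorter of the two chains with leading zeroes, and general $k$ follows by induction. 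You instead work in the max norm (justifying the choice by equivalence of product norms), observe that a product sequence is an $\eps$-chain exactly when each coordinate projection is, and handle all $k$ coordinates \emph{in parallel} by routing each through $0$ and synchronizing the chain lengths with inserted copies of the fixed point. Your organization buys a direct, induction-free treatment of arbitrary $k$ and a symmetric argument for both directions; the paper's avoids any appeal to norm equivalence at the price of the asymmetric ``carry the other coordinate along its real orbit'' device. Both arguments are complete, and your length-synchronization step is watertight since every inserted transition is $\|T_j(0)-0\|=0<\eps$.
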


\noindent \textbf{Proof:} Consider any of the typical product norms in the product space. We will prove the case when $k=2$, the proof of the general case follows by induction. Let $(x,y) \in X_1 \times X_2$. Given $\eps>0$, consider an $\eps$-chain $\{x_0=x,x_1,\ldots,x_n=0\}$ that connects $x$ with $0$. We can see that \[\{(x_0,y),(x_1,T_2(y)),(x_2,{T_2}^2(y)),\ldots,(x_n, {T_2}^n(y))\}\] is an $\eps$-chain that connects $(x,y)$ with $(0,{T_2}^n(y))$. Following the same reasoning we are able to create an $\eps$-chain that connects $(0,{T_2}^n(y))$ with $(0,0)$. 
To go from $(0,0)$ to $(x,y)$ we consider $\eps/2$-chains $\{x_0=0,x_1,\ldots,x_n=x\}$ in $X_1$ and $\{y_{0}=0,y_1,\ldots,y_m=y\}$ in $X_2$ that go from $0$ to $x$ and from $0$ to $y$ respectively. If $n=m$, the sequence $\{(x_0,y_0), (x_1,y_1),\ldots, (x_n,y_n)\}$ is an $\eps$-chain beginning in $(0,0)$ and ending in $(x,y)$. Assuming $n>m$, then the finite sequence $$\{(0,0), (x_1,0), (x_2,0), \ldots,(x_{n-m},0),(x_{n-m+1},y_{1}), (x_{n-m+2},y_{2}), \ldots, (x_n,y_m) = (x,y)\}$$ is an $\eps$-chain connecting $(0,0)$ to $(x,y)$. 
One can see that this finite sequence is an $\eps$-chain that connects $(0,0)$ with $(x,y)$.


\qed

\section{Chain Recurrence and Positive Shadowing}

Let $(Y,d)$ be a metric space, $f : Y \to Y$ a continuous function, $\delta>0$ and $\{x_n\}_{n \in \mathbb{N}}$ a sequence in $Y$. We say that $\{x_n\}_{n \in \mathbb{N}}$ is a \textbf{positive $\delta$-pseudo orbit} of $f$ if $d(f(x_n),x_{n+1}) \leq \delta$ for all $n \in \mathbb{N}$. The function $f$ have the \textbf{positive shadowing property} if for each $\epsilon > 0$ there is $\delta > 0$ such that every $\delta$-pseudo orbit is $\epsilon$-shadowed by an $x \in Y$, i.e., there is $x \in Y$ such that 
$$
d(x_n,f^{n}(x)) < \epsilon \, \, \text{ for all }n  \in \mathbb{N}.
$$

It is easy to see that the identity map does not have the shadowing property, on the other hand it is also easy to see that proper contractions and proper dilations have the shadowing property. More generally, any hyperbolic operator on a Banach space has the shadowing property \cite{Messaoudi}. In \cite{bernardes2020shadowing} one necessary and sufficient condition for the weighted shift to have the shadowing property is obtained. To illustrate the positive shadowing property we present two examples bellow. The first one is an operator that has an eigenvalue equal to $1$ (and therefore it is not hyperbolic) and has positive shadowing. The second example is an invertible contraction that does not have the positive shadowing property.

\begin{example} \label{eigen1andshadowing}
In the space $\ell_p(\mathbb{N})$, for $1 \leq p \leq \infty$, consider the operator $T: \ell_p(\mathbb{N}) \to \ell_p(\mathbb{N})$ given by $T(e_0) = e_0$, and $T(e_i) = 2 e_{i - 1}$ if $i \geq 1$, where $\{e_i\}_{i\in\mathbb{N}}$ is the canonical basis. Note that there is an eigenspace associated with the eigenvalue $1$. This operator has positive shadowing. To see this consider the operator $Se_{i} = e_{i+1}/2$ for every $i \in \mathbb{N}$. Note that $\|S\|=1/2$ and that $S$ is a right inverse for $T$. Given any $\delta$-pseudo orbit $\{x_n\}_{n \in \mathbb{N}}$ note that the point 
$$
x = x_0 + S(x_1 - Tx_0) + S^{2}(x_2 - Tx_1) + \cdots
$$
is in $\ell_p(\mathbb{N})$ and $2 \delta$-shadows the pseudo-orbit. Therefore $T$ has positive shadowing. 

It is not hard to see that $T$ is transitive and therefore is chain transitive when $1 \leq p < \infty$. Indeed let $U$ and $V$ be two non-empty open subsets of $\ell_{p}(\mathbb{N})$. Since sequences with a finite number of non null elements are dense in $\ell_{p}(\mathbb{N})$ let $x=(x_0,x_1,\ldots,x_n,0,0,\ldots) \in U$ and $y=(y_0,y_1,\ldots,y_m,0,0,\ldots) \in V$. Consider 
$$
r = \sum_{i=0}^{n} x_i 2^{i}
$$
and choose $k,l \in \mathbb{N}$ such that 
$$
z = ( 
\underbrace{
\underbrace{x_0,x_1,\ldots,x_n,0,0,\ldots, 0 , -\dfrac{r}{2^k}}_{k \text{  positions}}, 0, \ldots, 0, \dfrac{y_{0}}{2^{l}}}_{l \text{ positions}}, \dfrac{y_{1}}{2^{l+1}}, \ldots, \dfrac{y_{m}}{2^{l+m}},0, 0, \ldots ) \in U.
$$
Hence $T^lz = y$ and so $T^{l} U \cap V \neq \emptyset$, therefore $T$ is transitive. Since $T$ has positive shadowing and is transitive, then Theorem A gives us that $T$ is topologically mixing and frequently hypercyclic.
\end{example}


\begin{example}
Consider the space of real Lebesgue integrable functions in $[1/2,1]$, $L_1([1/2,1])$, and the operator $T : L_1([1/2,1]) \to L_1([1/2,1])$, given by $T(f)(x)=f(x)x$ for every $x \in [1/2,1]$. Notice that $T$ is invertible. We will prove that this operator does not have the shadowing property, that is, for each $\delta>0$ there is a $\delta$-pseudo orbit which cannot be $\eps$-shadowed for any $\eps>0$. Given $\delta>0$ consider the sequence $\{f_n^\delta\}_{n\in\mathbb{N}}$ where
\[f_n^\delta(x)=\delta(x^{n-1}+x^{n-2}+\cdots+x+1).\] This sequence is a $\delta$-pseudo orbit, since
\[\begin{array}{rcl}\|T(f_n^\delta)-f_{n+1}^\delta\|_1 & = & \|\delta(x^{n-1}+x^{n-2}+\cdots+x+1)x - \delta(x^n+x^{n-1}+\cdots+x+1)\|_1\\
\\
&=&\|\delta(x^{n}+x^{n-1}+\cdots+x^2+x) - \delta(x^n+x^{n-1}+\cdots+x+1)\|_1\\
\\
&=& \|\delta\|_1 \;=\;\displaystyle\int_{1/2}^1\delta dx\;=\;\dfrac{\delta}{2}\;<\;\delta.
\end{array}\]
Let us show that $\{f_n^\delta\}$ previously defined cannot be $\eps$-shadowed for any $\eps>0$. Indeed, for any $g\in L^1([1/2,1])$ we have that
$$\begin{array}{rcl}
\|T^{n}g - f_n^{\delta}\|_1& = &
\|x^n g(x)  - \delta (x^{n-1} + \cdots + 1)\|_1\\
\\
&\geq&| \, \|x^ng(x) \|_1 - \|\delta (x^{n-1} + \cdots + 1)\|_1 \, |.
\end{array}
$$
Notice that since $x<1$ almost surely, then $x^n g(x) \to 0$ almost surely as $n \to \infty$. Therefore, by the Dominated Convergence Theorem, we have that $\|x^ng(x) \|_1 \to 0$ as $n \to \infty$ and so this term is bounded. The second term in the above sum is not bounded. Indeed,
$$\begin{array}{rcl}
\displaystyle\|\delta (x^{n-1} + \cdots + 1)\|_1 &=& \displaystyle\delta \int_{1/2}^{1} x^{n-1} + \cdots + 1dx\\
\\
&= &\displaystyle
\delta \left[ \dfrac{x^n}{n} + \dfrac{x^{n-1}}{n-1} + \cdots + x \right]_{1/2}^1\\
\\
&= &\displaystyle\delta \left( \dfrac{1}{n} + \dfrac{1}{n-1} + \cdots + 1 - \dfrac{1}{2^n} \dfrac{1}{n} - \dfrac{1}{2^{n-1}} \dfrac{1}{n-1} - \cdots - \dfrac{1}{2} \right)\\
\\
&=& \displaystyle \delta \sum_{k=1}^{n} \dfrac{1}{k}\left( 1 - \dfrac{1}{2^k} \right),
\end{array}
$$
using a comparison test with the harmonic series, for instance the limit comparison test \cite{limit}, one can readily see that the right side goes to $+\infty$ as $n \to + \infty$. This ensures that $\|T^ng-f_n\|_1\rightarrow\infty$, therefore the $\delta$-pseudo orbit $\{f_n^\delta\}_{n\in\mathbb{N}}$ cannot be $\eps$-shadowed for any $\eps>0$.

\end{example}

It turns out that chain transitivity and positive shadowing have an interesting synergy that allows us to obtain the most important results of this paper. Chain transitivity allows us to connect different points of the space with $\epsilon$-chains and the shadowing property tell us that there will be a point that shadows such chain. 


\begin{theorem} \label{positiveshadowingmixing}
Let $X$ be a normed vector space, and $T : X \to X$ a bounded linear operator with both chain transitive and positive shadowing property. Then $T$ is topologically mixing.
\end{theorem}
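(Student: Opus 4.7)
The plan is to combine the two hypotheses in the most direct way: chain transitivity links any two points by a pseudo-chain, and positive shadowing converts such a chain into an actual orbit. The one extra subtlety I need to handle is that topological mixing requires connecting $U$ to $V$ in \emph{every} sufficiently large time $n$, not just in some single $n$. I will exploit the fact that $0$ is a fixed point of $T$ to splice arbitrarily many zeros into any chain that passes through the origin, thereby producing $\delta$-pseudo orbits of every prescribed length at least $p+q$ between two chosen points.

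Concretely, I would fix nonempty open sets $U, V \subset X$, pick $u \in U$, $v \in V$ and $r > 0$ with $B(u,r) \subset U$ and $B(v,r) \subset V$. Choose $\eps < r$ and let $\delta > 0$ be the shadowing constant corresponding to $\eps$. By chain transitivity, select a $\delta$-chain $\{a_0 = u, a_1, \ldots, a_p = 0\}$ from $u$ to the origin and a $\delta$-chain $\{b_0 = 0, b_1, \ldots, b_q = v\}$ from the origin to $v$. For any integer $n \geq p + q$, I would consider the infinite sequence
$$a_0, a_1, \ldots, a_p, \underbrace{0, \ldots, 0}_{n - p - q \text{ entries}}, b_1, b_2, \ldots, b_q, T v, T^2 v, T^3 v, \ldots$$
and check that it is a positive $\delta$-pseudo orbit. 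Transitions between consecutive zeros vanish since $T(0) = 0$; the transition from $a_p = 0$ into the first inserted zero, and from the last inserted zero into $b_1$, both reduce to $\|b_1 - T(0)\| = \|b_1\| < \delta$, which holds because the $b$-chain originates at $0$; and the appended tail is a genuine $T$-orbit.

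Positive shadowing then produces a point $z \in X$ whose orbit $\eps$-shadows this pseudo orbit coordinatewise. Reading off the $0$-th and $n$-th coordinates gives $\|z - u\| < \eps < r$ and $\|T^n z - v\| < \eps < r$, so $z \in U$ and $T^n z \in V$. This yields $T^n(U) \cap V \neq \emptyset$ for every $n \geq p + q$, which is precisely topological mixing.

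The step I expect to demand the most care is the zero-splicing construction, specifically verifying that the spliced sequence satisfies the $\delta$-pseudo-orbit condition at every junction (end of the $a$-chain, interior of the block of zeros, start of the $b$-chain, and the handoff to the tail of true iterates of $v$). Once that verification is in place, the rest of the argument is a mechanical combination of chain transitivity with the positive shadowing property.
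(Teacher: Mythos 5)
Your proposal is correct and follows essentially the same route as the paper's own proof: both arguments route a $\delta$-chain from a point of $U$ through the origin to a point of $V$, pad the chain at $0$ with an arbitrary number of zeros to hit every sufficiently large time $n$, append the true forward orbit of the target point, and then invoke positive shadowing to produce the required point of $U$ landing in $V$. The junction verifications you flag (in particular $\|T(0)-b_1\|=\|b_1\|<\delta$ from the $b$-chain starting at $0$) are exactly the ones needed, so no gap remains.
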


\noindent \textbf{Proof:} Let $U$, $V$ be non-empty open subsets of $X$. Let $x$ be a vector in $U$, and $y$ a vector in $V$ and let $\lambda > 0$ be such that $B(x,\lambda) \subset U $ and $B(y,\lambda) \subset V$. Let $\epsilon = \lambda/2$ and let $\delta>0$ be associated with this $\epsilon$ from the positive shadowing property. Since $T$ is chain transitive, there is a $\delta$-chain that goes from $x$ to the origin, $\{x_0,x_1,\ldots,x_n\}$, and a $\delta$-chain that goes from the origin to $y$, $\{y_0,y_1,\ldots,y_m\}$. Then we have that 
$$
\{x, x_1, x_2, \ldots, x_n, \underbrace{0, 0, \ldots ,0}_{\text{total of } k \text{ zeroes}} ,y_1, \ldots, y_{m-1}, y, Ty, T^2y, \ldots\}
$$
is a positive $\delta$-pseudo orbit for every $k \in \mathbb{N}$, and therefore there is a $z_k \in X$ that $\epsilon$-shadows such pseudo orbit. Notice that $z_k \in U$ and $T^{n+m+k}z_k \in V$ for every $k \in \mathbb{N}$, hence $T^{n+m+k}U \cap V \neq \emptyset$ for every $k \in \mathbb{N}$.

\qed

The above Theorem together with Proposition \ref{multiplechainrecurrence} provide the following corollary.

\begin{corollary}
If $T_i : X_i \to X_i$ for $1\leq i \leq n$ is a finite family of linear dynamical systems such that, for each $i$, $(T_i,X_i)$ satisfies the hypothesis of the Theorem \ref{positiveshadowingmixing}. Then $T_1 \times \ldots \times T_n : X_1 \times \ldots X_n \to X_1 \times \ldots X_n$ is topologically mixing.
\end{corollary}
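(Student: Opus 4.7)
The plan is to reduce this to a direct application of Theorem \ref{positiveshadowingmixing} to the product operator $T := T_1 \times \cdots \times T_n$: since chain transitivity of $T$ is already delivered by Proposition \ref{multiplechainrecurrence}, the only real work is to show that the positive shadowing property passes to finite products.

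For that step, I would equip $X_1 \times \cdots \times X_n$ with the maximum product norm $\|(v_1,\ldots,v_n)\| := \max_i \|v_i\|_{X_i}$, any other typical product norm being equivalent up to a harmless multiplicative constant. Given $\epsilon > 0$, I would use the positive shadowing of each $T_i$ to choose $\delta_i > 0$ such that every positive $\delta_i$-pseudo orbit of $T_i$ is $\epsilon$-shadowed in $X_i$, and then set $\delta := \min_i \delta_i$. A positive $\delta$-pseudo orbit $\{\mathbf{x}_k\}_{k \in \mathbb{N}}$ of $T$ projects coordinatewise onto sequences $\{x_k^{(i)}\}_{k \in \mathbb{N}}$ in $X_i$ that satisfy $\|T_i(x_k^{(i)}) - x_{k+1}^{(i)}\|_{X_i} \le \|T(\mathbf{x}_k) - \mathbf{x}_{k+1}\| \le \delta \le \delta_i$, so each coordinate is a positive $\delta_i$-pseudo orbit of $T_i$ and is therefore $\epsilon$-shadowed by some $z^{(i)} \in X_i$. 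The vector $\mathbf{z} := (z^{(1)},\ldots,z^{(n)})$ then $\epsilon$-shadows $\{\mathbf{x}_k\}$ in the maximum norm, because $\|T^k\mathbf{z} - \mathbf{x}_k\| = \max_i \|T_i^k z^{(i)} - x_k^{(i)}\|_{X_i} < \epsilon$ for every $k$. Hence $T$ has the positive shadowing property.

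With both hypotheses of Theorem \ref{positiveshadowingmixing} verified for $T$, that theorem immediately yields that $T_1 \times \cdots \times T_n$ is topologically mixing. I do not expect any substantive obstacle; the only mild care needed is the bookkeeping between product norms, which at worst introduces a multiplicative constant in the relation between $\delta$ and the coordinatewise parameters $\delta_i$ and is absorbed into the choice of $\delta$. An induction on $n$ reducing to the case $n=2$ would work equally well, but the direct argument above already handles the general finite product without extra effort.
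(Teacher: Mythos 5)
Your proposal is correct and follows essentially the same route the paper intends: chain transitivity of the product via Proposition \ref{multiplechainrecurrence}, preservation of positive shadowing under finite products, and then Theorem \ref{positiveshadowingmixing} applied to the product operator. The only difference is that you explicitly verify the product-shadowing step (coordinatewise, in the maximum norm), which the paper leaves implicit here and only mentions in passing later as ``the fact that the shadowing property is preserved under cartesian product.''
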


The next lemma allows us to obtain subsets of $\mathbb{N}$ that are big enough to have positive lower density and at the same time are spread enough apart. This lemma is crucial in the proof of our main result, Theorem \ref{gymstheorem}. 

\begin{lemma}\cite[Lemma 6.19]{dynamicsoflinearoperators}\label{freqhypercycliclemma}
Let $\{N_p\}_{p \geq 1}$ be any sequence of positive real numbers. Then one can find a sequence $\{\Delta_p\}$ of pairwise disjoint subsets of $\mathbb{N}$ such that 
\begin{enumerate}
    \item Each set $\Delta_p$ has positive lower density;
    \item $\min(\Delta_p) \geq N_p$ and $|n-m| \geq N_p + N_q$; whenever $n \neq m$ and $(n,m) \in \Delta_p \times \Delta_q$.
\end{enumerate}
\end{lemma}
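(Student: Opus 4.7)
The plan is to build the sets $\Delta_p$ inductively as arithmetic progressions with rapidly growing, nested common differences, and to verify the separation conditions via a simple volume-counting argument within a single period.

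First, by replacing $N_p$ with $\max_{q \leq p} \lceil N_q \rceil$, I may assume that $(N_p)_{p \geq 1}$ is a nondecreasing sequence of positive integers. Second, I fix a sequence of positive integers $(d_p)_{p \geq 1}$ such that $d_q$ divides $d_p$ whenever $q \leq p$ and such that
$$\sum_{q=1}^{p-1} \frac{4 N_p}{d_q} \;<\; 1 \qquad \text{for every } p \geq 2;$$
this is easy to arrange recursively, for instance by taking $d_1 = 8 N_1$ and $d_{p+1} = d_p \cdot 2^{p+1} (N_{p+1}+1)$. Every $\Delta_p$ will be of the form $\Delta_p = a_p + d_p \mathbb{N}$ for a suitably chosen shift $a_p \geq N_p$, so that it automatically has $\min \Delta_p \geq N_p$ and positive lower density $1/d_p > 0$.

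The inductive step is the heart of the construction. Suppose $\Delta_1, \ldots, \Delta_{p-1}$ have already been defined. Because $d_q \mid d_p$, the progression $\Delta_q$ occupies exactly $d_p/d_q$ residue classes modulo $d_p$. Inflating each such residue class by an $(N_p + N_q)$-neighborhood, measured cyclically modulo $d_p$, rules out at most $(d_p/d_q)\cdot 2(N_p + N_q) \leq 4 N_p \cdot d_p / d_q$ residues. Summing over $q < p$ and invoking the growth assumption on $(d_q)$, the total number of ruled-out residues is strictly less than $d_p$. Hence there is a residue $r \in \{0, 1, \ldots, d_p - 1\}$ at cyclic distance $\geq N_p + N_q$ from every residue of $\Delta_q$ modulo $d_p$, for each $q < p$. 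I then set $a_p = r + k d_p$ with $k$ chosen large enough that $a_p \geq N_p$, and define $\Delta_p = a_p + d_p \mathbb{N}$.

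To finish, the conclusions are verified as follows: disjointness and the separation $|n - m| \geq N_p + N_q$ for $n \neq m$ in $\Delta_p \times \Delta_q$ hold because, whenever $n = a_p + j d_p$ and $m \in \Delta_q$, the quantity $|n - m|$ is bounded below by the cyclic distance modulo $d_p$ between the residue of $a_p$ and that of $m$, which was engineered to be $\geq N_p + N_q$; for $n, m \in \Delta_p$ with $n \neq m$ the gap is at least $d_p \geq 2 N_p$. The main obstacle is the combinatorial bookkeeping needed to guarantee that a valid residue $r$ always exists at stage $p$, which is secured by the divisibility chain $d_q \mid d_p$ (making the forbidden set genuinely periodic of period $d_p$) together with the rapid growth of the $d_q$. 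Note that the lower density $1/d_p$ of $\Delta_p$ decays to $0$ with $p$, but this is irrelevant since the lemma only requires each individual $\Delta_p$ to have positive lower density.
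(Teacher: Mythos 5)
Your construction has a fatal structural flaw: each $\Delta_p=a_p+d_p\mathbb{N}$ is syndetic, and syndetic sets cannot satisfy condition (2) when $\sup_p N_p=\infty$. Concretely, every integer $n\ge a_1$ lies within distance $d_1/2$ of $\Delta_1$, so for any $n\in\Delta_p$ with $n\ge a_1$ (there are infinitely many, since $\Delta_p$ has positive lower density) the nearest $m\in\Delta_1$ satisfies $|n-m|\le d_1/2$; condition (2) then forces $N_p+N_1\le d_1/2$ for \emph{every} $p$, which is impossible once the $N_p$ are unbounded. The same obstruction surfaces in your counting step: the proportion of forbidden residues at stage $p$ is $\sum_{q<p}\bigl(2(N_p+N_q)+1\bigr)/d_q$, whose $q=1$ term alone is at least $2N_p/d_1$ and exceeds $1$ as soon as $N_p>d_1/2$. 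No recursive choice of $d_2,d_3,\ldots$ can repair this, because $d_1$ is frozen at the first step while $N_p$ reappears in every later numerator; indeed your explicit choice $d_1=8N_1$ already fails the stage-$2$ inequality unless $N_2<2N_1$. This is not a corner case: in this paper the lemma is invoked with $\{N_p=2R_p\}$ strictly increasing, so the unbounded regime is exactly the one needed. (The paper itself gives no proof, quoting Lemma 6.19 of Bayart--Matheron, so the proposal must stand on its own, and it does not.)

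The missing idea is that the $\Delta_p$ must have \emph{unbounded} gaps while retaining positive lower density. The standard construction partitions $\mathbb{N}$ into longer and longer consecutive blocks, assigns to each index $p$ a positive proportion of the blocks (say those $k$ with $k\equiv 2^{p-1}\pmod{2^p}$), and inside each block assigned to $p$ places a finite arithmetic progression of step about $2N_p$, trimmed by buffer zones at both ends whose length dominates $N_p+N_q$ for the relevant neighboring indices. Positive lower density of $\Delta_p$ comes from density roughly $1/(2N_p)$ inside its own blocks times the positive frequency of those blocks in every long initial segment, and the separation $|n-m|\ge N_p+N_q$ holds because points of different indices are never interleaved at bounded distance. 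Your residue-class bookkeeping is a reasonable device, but it must be applied blockwise (where only finitely many indices with controlled $N$-values compete) rather than globally via a single infinite progression per index.
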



\noindent \textbf{Proof of Theorem \ref{gymstheorem}:} 

The conclusion of being topologically mixing is a direct consequence of Theorem \ref{positiveshadowingmixing}, we only need to prove that $T$ is frequently hypercyclic. Let $\{x_p\}_{p \in\mathbb{N}}$ be a countable dense set of vectors in $X$. For every $p \in \mathbb N$, let $\epsilon_p = 1/2^p$. By the positive shadowing property for each $p\in\mathbb{N}$  there is a $\delta_p>0$ such that every $\delta_p$-pseudo orbit is $\epsilon_p$ shadowed by some point of $X$. 

For each $x_p$ in the dense countable set let $N_p$ be the size of an $\delta_p/2$-chain that connects $0$ to $x_p$ and then to $0$ again (by the size of a chain we mean its cardinality). By completing with zeroes, we may suppose $\{N_p\}_{p \in \mathbb{N}}$ is a strictly increasing sequence of even numbers of the form $N_p=2R_p$, with $R_p\in\mathbb{N}\setminus\{0\}$ and ${1}/{R_p}<{\delta_p}/{4}$ for every $p\in\mathbb{N}$. We may also suppose that it takes half of the chain to reach $x_p$ from $0$ and other half to go from $x_p$ to $0$. Formally, our chain has the general form 
\begin{equation} \label{z0pseudoorbit}
\{x_0^p=0,x_1^p, \ldots, x_{R_p}^p= x_p,\ldots, x_{N_p-1}^p = 0\}.
\end{equation}
For the sequence $\{N_p=2R_p\}_{p\in\mathbb{N}}$, we associate a sequence of subsets $\{\Delta_p\}_{p\in\mathbb{N}}$ of $\mathbb{N}$ given by the Lemma \ref{freqhypercycliclemma}.

Bellow we describe an induction procedure to define a sequence of vectors $\{z_p\}_{p\in\mathbb{N}}$ of $X$ satisfying the following properties

\begin{itemize}
    \item[(a)] $\|z_p\|<\dfrac{1}{2^p}$ for every $p \in \mathbb{N}$;
    \item[(b)] if $m \in \Delta_p$ then $\displaystyle \left\|\sum_{0\leq q\leq p}T^{m+R_p}(z_q)-x_p\right\|<\frac{1}{2^p}$ for every $p \in \mathbb{N}$;
    \item[(c)] given $n,p \in \mathbb{N}$ with $n \notin \{ m, m+1, \ldots, m+ N_p-1: m \in \Delta_p\}$, then $\| T^{n} (z_p) \| < \dfrac{1}{2^p}$.
\end{itemize}

\noindent Since we assume $T^0 z_p=z_p$, the reader will notice that Property (a) is a consequence of Property (c), but we decided to make Property (a) explicit for the sake of clarity. Assume for now that such a  sequence $\{z_p\}_{p \in \mathbb{N}}$ exists. In this case the vector \[z=\sum_{p\in\mathbb{N}}z_p\] is well defined, since Property (a) implies $\|z\|=\|\sum z_p\|\leq\sum1/2^p=2$. We claim that $z$ is a frequently hypercyclic vector for $T$. Indeed, let $V$ be a non-empty open subset of $X$. Consider $w\in V$ and $\lambda>0$ such that $B(w,\lambda)\subset V$, and let $q_0\in\mathbb{N}$ be such that $1/2^{q_0}<\lambda/2.$ By the density of $\{x_p\}_{p\in\mathbb{N}}$, we can choose $x_p\in B(w,\lambda/2)$ with $p> q_0$. For each $m\in \Delta_p$, we have by Properties (b), (c) and item 2 of Lemma \ref{freqhypercycliclemma} that
$$
\|T^{m+R_p}(z)-x_p\| \leq \left\|\sum_{0\leq q\leq p}T^{m+R_p}(z_q)-x_p\right\|+\left\|\sum_{q>p}T^{m+R_p}(z_q)\right\| \leq
$$
$$
\frac{1}{2^p} + \sum_{q>p} \left\|T^{m+R_p}(z_q)\right\| \leq
\frac{1}{2^p}+\frac{1}{2^{p+1}}+\frac{1}{2^{p+2}}+\cdots = \frac{1}{2^{p-1}} < \dfrac{\lambda}{2}.
$$
Thus $T^{m+R_p}(z)\in B(x_p,\lambda/2)\subset V$ for all $m\in\Delta_p$, that is,
\[\underline{\emph{dens}}(\{n\in\mathbb{N}\;:\;T^n(z)\in V\})\geq \underline{\emph{dens}}(\Delta_p+R_p){=}\underline{dens}(\Delta_p)>0,\] where
$\Delta_p+R_p=\{m+R_p\;:\;m\in\Delta_p\}$ (see \cite{lowerdensity} for equality above). Proving, therefore, that $z$ is a frequently hypercyclic vector for $T$ and consequently $T$ is frequently hypercyclic.

We now obtain the vectors $z_p$ with Properties (a), (b) and (c). We use an induction procedure, as such the first step is to obtain $z_0$. For this end, consider the sequence $\{\gamma_n^0\}_{n\in\mathbb{N}}$ given by
\[\gamma_n^0=\left\{\begin{array}{ll}
x_k^{0} & \mbox{if } n=m+k \mbox{ with } m\in\Delta_0 \mbox{ and } k\in\{0,1,\ldots, N_0-1\}\\
0 &\mbox{if } n\in\mathbb{N}\setminus\{m,m+1,\ldots,m+N_0-1\;:\;m\in\Delta_0\}.
\end{array}\right.\] By the definition of \eqref{z0pseudoorbit} we have that $\{\gamma_n^0\}_{n\in\mathbb{N}}$ is a $\delta_0$-pseudo orbit. Thus the positive shadowing property guarantees the existence of  $z_0\in X$ such that
\[\|T^n(z_0)-\gamma_n^0\|<\eps_0, \mbox{ for every }n\in\mathbb{N}.\] This immediately implies that $\|z_0\|=\|T^0(z_0)-0\|=\|T^0(z_0)-\gamma_0^0\|<\eps_0$, and therefore $z_0$ satisfies Property (a). Notice that for each $m\in\Delta_0$, \[\|T^{m+R_0}(z_0)-\gamma_{m+R_0}^0\|=\|T^{m+R_0}(z_0)-x_{R_0}^0\|=\|T^{m+R_0}(z_0)-x_0\|<\eps_0\] which implies that $z_0$ satisfies Property (b). For every $n\in\mathbb{N}\setminus\{m,\ldots, m+N_0-1\;:\;m\in\Delta_0\}$, $\|T^n(z_0)-0\|=\|T^n(z_0)\|<\eps_0\leq 1$ and therefore $z_0$ satisfies Property (c).


As part of the induction procedure we will now assume that we have $z_1,z_2,\ldots,z_{p-1}$ and will now obtain $z_p$, $p\geq1$. For this end, define the sequence 
$\{\gamma_n^p\}_{n\in\mathbb{N}}$ as follows:
\[\gamma_n^p=\left\{\begin{array}{ll}
x_k^{p}-\dfrac{k}{R_p}T^{n}(z_0+z_1+\ldots+z_{p-1})& \mbox{if } n=m+k \mbox{ with } m\in\Delta_p\\
& \mbox{and } k\in\{0,1,\ldots,R_p\}\\
x_k^{p}- \dfrac{N_p-k}{R_p}T^n(z_0+z_1+\ldots+z_{p-1})& \mbox{if } n=m+k \mbox{ with } m\in\Delta_p\\
&\mbox{and } k\in\{R_p+1,\ldots, N_p-1\}\\
\\
0& \mbox{if }n\in\mathbb{N}\setminus\{m,\ldots, m+N_p-1\}\\
&\mbox{and } m\in\Delta_p.\\
\end{array}\right.\]
We shall see bellow that $\{\gamma_n^p\}_{n\in\mathbb{N}}$ is a $\delta_p$-pseudo orbit. This will guarantee the existence of a $z_p$ which $\eps_p$-shadows $\{\gamma_n^p\}$, that is,
\[\|T^n(z_p)-\gamma_n^p\|<\eps_p=\frac{1}{2^p}.\]
\noindent Since $\gamma_0^p=0$ follows that $\|z_p\|=\|T^0(z_p)-0\|<\eps_p=1/2^p$. Thus, $z_p$ satisfies the Property (a). 
\noindent Notice that with $z_p$ obtained this way we have, for each $m\in\Delta_p$, that \[\|T^{m+R_p}(z_0+z_1+\ldots+z_p)-x_p\|<\eps_p=\frac{1}{2^p}.\] Indeed, since $z_p$ shadows $\{\gamma_{n}^p\}$
\[\begin{array}{rcl}
\epsilon_p>\|T^{m+R_p}(z_p)-\gamma_{m+R_p}^p\|&=&\|T^{m+R_p}(z_p)-x_p+T^{m+R_p}(z_0+\ldots+z_{p-1})\|\\
&=&\|T^{m+R_p}(z_0+\ldots+z_p)-x_p\|.\\
\end{array}\]
Therefore, Property (b) is assured. Since $\gamma_{n}^p=0$ if $n \notin \{ m, m+1, \ldots, m+ N_p-1: m \in \Delta_p\}$ we have Property (c).


The last step remaining is to show that $\{\gamma_n^p\}_{n\in\mathbb{N}}$ is a $\delta_p$-pseudo orbit. First notice that from the definitions of $\Delta_p$ and $\Delta_q$ and item (2) from Lemma \ref{freqhypercycliclemma} follows that $\{k,\ldots, k+N_p\;:\;k\in\Delta_p\} \cap \{m,\ldots, m+N_q\;:\;m\in\Delta_q\} = \emptyset$ whenever $p\neq q$. This fact and Property (c) imply that if $n \in \{m,\ldots, m+N_p-1\;:\;m\in\Delta_p\}$, then
\begin{equation}\label{soma}
    \|T^{n}(z_0+z_1+\ldots+z_{p-1})\|=\left\|\sum_{0\leq q<p}T^n(z_q)\right\|\leq \sum_{0\leq q<p}\|T^n(z_q)\|\leq\sum_{0\leq q<p}\frac{1}{2^q}<2.
\end{equation} It is obvious that if $n,n+1\in\mathbb{N}\setminus\{m,\ldots,m+N_p-1\;:\;m\in\Delta_p\}$, then \[\|T(\gamma_n^p)-\gamma_{n+1}^p\|=0<\delta_p.\] If $n$ or $n+1$ belongs to $\{m,\ldots, m+N_p-1\;:\;m\in\Delta_p\}$, we have $5$ possibilities:

    \noindent\emph{Case 1:} $n\in\mathbb{N}\setminus\{m,\ldots, m+N_p-1\;:\;m\in\Delta_p\}$ and 
    $n+1\in\Delta_p$. 
    
    In this case, we have
    \[\gamma_n^p=0 \mbox{ and }\gamma_{n+1}^p=x_0^p-\frac{0}{R_p}T^{n+1}(z_0+\cdots+z_{p-1})=0.\]
    Hence,
    \[\|T(\gamma_n^p)-\gamma_{n+1}^p\|=0<\delta_p.\]
    %
    
    \noindent\emph{Case 2:} $n=m+k$ with $m\in\Delta_p$ and $k\in\{0,\ldots, R_p-1\}.$
    
    In this case, we have \[\gamma_n^p=x_k^{p}-\dfrac{k}{R_p}T^n(z_0+\cdots+z_{p-1}) \mbox{ and } \gamma_{n+1}^p=x_{k+1}^p-\frac{k+1}{R_p}T^{n+1}(z_0+\cdots+z_{p-1}).\] Then,
    \[\begin{array}{rcl}
    \|T(\gamma_n^p)-\gamma_{n+1}^p\| &=&\displaystyle \left\|T(x_k^{p}) - \dfrac{k}{R_p}T(T^n(z_0+\cdots+z_{p-1}))\right. -\\
    \\
    &&\displaystyle \;\;\;- \left.\left(x_{k+1}^p-\dfrac{k+1}{R_p}T^{n+1}(z_0+\cdots+z_{p-1})\right)\right\|\\
    \\
    &\leq &\displaystyle \|T(x_k^{p})-x_{k+1}^p\|+\frac{1}{R_p}\|T^{n+1}(z_0+\cdots+z_{p-1})\|\\ 
    \\
    &<&\dfrac{\delta_p}{2}+\dfrac{\delta_p}{4}\cdot 2\\
    \\
    & < &\delta_p.
    \end{array}\] The second inequality is assured by Equation (\ref{soma}).
    
    %
    \noindent\emph{Case 3:} $n=m+R_p$ with $m\in\Delta_p$.
    
   In this case, \[\gamma_n^p=x_{R_p}^p-\dfrac{R_p}{R_p}T^n(z_0+\cdots+z_{p-1})=x_p-T^n(z_0+\cdots+z_{p-1}) \mbox{ and }\]\[\gamma_{n+1}^p = x_{R_p+1}^p-\dfrac{N_p-(R_p+1)}{R_p}T^{n+1}(z_0+\cdots+z_{p-1}).\] Thus,
    \[\begin{array}{rcl}
    \|T(\gamma_n^p)-\gamma_{n+1}^p\| &=& \displaystyle\left\|T(x_p)-T^{n+1}(z_0+\cdots+z_{p-1})-x_{R_p+1}^p+\frac{R_p-1}{R_p}T^{n+1}(z_0+\cdots+z_{p-1})\right\|\\
    \\
    &\leq&\|T(x_p)-x_{R_p+1}^p\|+\dfrac{1}{R_p}\|T^{n+1}(z_0+\cdots+z_{p-1})\|\\
    \\
    &<&\dfrac{\delta_p}2+\dfrac{\delta_p}{4}\cdot 2\;\;\;\;\;\;\;\;\mbox{( by Eq. (\ref{soma}))}\\
    \\
    &=&\delta_p.\\
    \end{array}\] 
    
    \noindent\emph{Case 4:} $n=m+k$ with $m\in\Delta_p$ and $k\in\{R_p+1, R_p+2,\ldots, N_p-2\}$. It is analogous to case 2.
    
    
    \noindent\emph{Case 5:} $n=m+N_p-1$ with $m\in\Delta_p$. 
    
     In this case, we have
     \[\gamma_n^p=x_{N_p-1}^p-\frac{N_p-(N_p-1)}{R_p}T^n(z_0+\cdots+z_{p-1})=0-\dfrac{1}{R_p}T^n(z_0+\cdots+z_{p-1}) \mbox{ and }\gamma_{n+1}^p=0.\] By Equation (\ref{soma})
     \[\|T(\gamma_n^p)-\gamma_{n+1}^p\|=\dfrac{1}{R_p}\|T^{n+1}(z_0+\cdots+z_{p-1})\|<\frac{\delta_p}{4}\cdot 2<\delta_p.\]

\noindent This concludes that $\{\gamma_n^p\}_{n\in\mathbb{N}}$ is a $\delta_p$-pseudo orbit. 

\qed

\begin{corollary} \label{hyperimpliesfreq.hyper}
Let $X$ be a separable Banach space. If $T : X \to X$ is hypercyclic (or recurrent) and has the positive shadowing property then $T$ is frequently hypercyclic and topologically mixing.
\end{corollary}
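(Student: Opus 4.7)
The plan is to deduce everything from Theorem \ref{gymstheorem} (Theorem A): since $T$ already has positive shadowing by hypothesis, it suffices to verify that both hypercyclicity and recurrence imply chain transitivity, and then invoke Theorem A to obtain topological mixing and frequent hypercyclicity simultaneously.

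First I would handle the hypercyclic case. Let $x_0$ be a hypercyclic vector and fix $y, z \in X$ and $\eps > 0$. Set $\eps' = \eps / \max\{\|T\|, 1\}$. By density of the orbit of $x_0$, I can choose indices $n < m$ with $\|T^n x_0 - y\| < \eps'$ and $\|T^{m+1} x_0 - z\| < \eps$. The finite sequence
\[
\{y,\; T^{n+1} x_0,\; T^{n+2} x_0,\; \ldots,\; T^m x_0,\; z\}
\]
is then an $\eps$-chain from $y$ to $z$: the first gap is bounded by $\|T\| \cdot \|y - T^n x_0\| \le \eps$, consecutive orbit transitions contribute zero error, and the last gap is $\|T^{m+1}x_0 - z\| < \eps$. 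Symmetrically one gets an $\eps$-chain from $z$ to $y$, so $T$ is chain transitive.

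Next I would handle the recurrent case. By definition, for every $x \in X$ and every open set $U$ containing $x$ there exists $n > 0$ with $T^n(U) \cap U \neq \emptyset$, i.e.\ every point of $X$ is non-wandering. To promote this to chain recurrence, fix $x \in X$ and $\eps > 0$, and apply the definition to $U = B(x, \eps/(2\max\{\|T\|, 1\}))$ to produce $n > 0$ and $u \in U$ with $T^n u \in U$. Then
\[
\{x,\; Tu,\; T^2 u,\; \ldots,\; T^{n-1} u,\; x\}
\]
is an $\eps$-chain beginning and ending at $x$, since the opening gap is bounded by $\|T\| \cdot \|x - u\| \le \eps/2$, the intermediate gaps vanish, and the closing gap equals $\|T^n u - x\| < \eps/2$. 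Hence every point of $X$ is chain recurrent, and by Corollary \ref{onerecurrenceclass} the operator $T$ is chain transitive.

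With chain transitivity established in both cases, Theorem \ref{gymstheorem} applies directly and yields that $T$ is topologically mixing and frequently hypercyclic, completing the proof. There is no real obstacle here; the only mild technical point is carrying the factor $\max\{\|T\|, 1\}$ through so that the first step of each chain, which is amplified by $T$, still stays within $\eps$.
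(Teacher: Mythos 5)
Your proposal is correct and takes essentially the same route as the paper: the paper treats this corollary as an immediate consequence of Theorem \ref{gymstheorem}, having already asserted (without written proof) that hypercyclic and recurrent operators are chain transitive, which is exactly your reduction. You simply supply the explicit $\epsilon$-chain constructions that the paper leaves to the reader, and these are carried out correctly.
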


\begin{corollary} \label{prodct of shadowing and CR}
If $T_i : X_i \to X_i$, for $1\leq i \leq n$, is a finite family of linear dynamical systems such that, for each $i$, $(T_i,X_i)$ satisfies the hypothesis of the Theorem \ref{gymstheorem}, then $T_1 \times \ldots \times T_n : X_1 \times \ldots X_n \to X_1 \times \ldots X_n$ is frequently hypercyclic and topologically mixing.
\end{corollary}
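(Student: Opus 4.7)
The plan is to reduce the corollary to a direct application of Theorem \ref{gymstheorem} to the product operator $T := T_1 \times \cdots \times T_n$ acting on $Y := X_1 \times \cdots \times X_n$. To do so, I need to verify three things: (i) $Y$ is a separable Banach space, (ii) $T$ is chain transitive, and (iii) $T$ has the positive shadowing property. Once these are in place, Theorem \ref{gymstheorem} yields immediately that $T$ is topologically mixing and frequently hypercyclic.

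For (i), I would endow $Y$ with the maximum product norm $\|(x_1,\ldots,x_n)\| := \max_{1 \leq i \leq n} \|x_i\|_{X_i}$; completeness and separability follow at once from the corresponding properties of each factor (taking the countable set of tuples whose coordinates come from fixed countable dense subsets of each $X_i$). For (ii), this is exactly the content of Proposition \ref{multiplechainrecurrence}, which says that a finite product of chain transitive bounded linear operators is chain transitive.

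The only genuinely new step is (iii). Given $\epsilon>0$, for each $i\in\{1,\ldots,n\}$ apply the positive shadowing property of $T_i$ to obtain $\delta_i>0$ such that every positive $\delta_i$-pseudo orbit of $T_i$ is $\epsilon$-shadowed by some point of $X_i$. Set $\delta := \min_i \delta_i$. Under the max norm, a sequence $\{(x_k^{(1)},\ldots,x_k^{(n)})\}_{k\in\mathbb{N}}$ is a positive $\delta$-pseudo orbit of $T$ if and only if each coordinate sequence $\{x_k^{(i)}\}_{k\in\mathbb{N}}$ is a positive $\delta$-pseudo orbit of $T_i$, because $\|T(x)-y\| = \max_i \|T_i(x^{(i)})-y^{(i)}\|$. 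By hypothesis, each coordinate pseudo orbit can be $\epsilon$-shadowed by some $y_i \in X_i$, and then the vector $(y_1,\ldots,y_n)$ $\epsilon$-shadows the original pseudo orbit in the max norm, since $\|T^k(y_1,\ldots,y_n) - (x_k^{(1)},\ldots,x_k^{(n)})\| = \max_i \|T_i^k(y_i)-x_k^{(i)}\| < \epsilon$.

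I do not foresee a serious obstacle: the whole argument is a clean reduction using the compatibility between the max product norm and the shadowing condition. The choice of product norm is what makes (iii) essentially free; with any other equivalent product norm the same conclusion holds up to harmless rescaling of the constants, since equivalent norms yield the same notion of positive shadowing for bounded operators.
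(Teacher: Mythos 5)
Your proof is correct and follows essentially the same route as the paper: apply Theorem \ref{gymstheorem} to the product operator, using Proposition \ref{multiplechainrecurrence} for chain transitivity and the preservation of positive shadowing under finite products. The only difference is that the paper merely asserts the latter preservation fact, whereas you supply the (correct) coordinatewise argument under the max norm.
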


\noindent \textbf{Proof:} It follows from Theorem \ref{gymstheorem}, Proposition \ref{multiplechainrecurrence}, Theorem \ref{positiveshadowingmixing} and the fact that the shadowing property is preserved under cartesian product.

\qed

Consider a topological dynamical system $(X,T)$. By an \textbf{invariant measure with full support} for $(X,T)$ we mean a measure $\mu$ defined over the Borelian $\sigma$-algebra of $X$ such that $\mu(X)=1$, $\mu(U)>0$ for any open set $U \subset X$ and $\mu(T^{-1}A) = \mu(A)$ for any mensurable set $A$.

\begin{corollary}
Let $X$ be a separable and reflexive Banach space and $T : X \to X$ a chain transitive map that has the positive shadowing property then there is an invariant measure with full support for $T$.
\end{corollary}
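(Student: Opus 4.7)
The plan consists of two steps. First, I would invoke Theorem \ref{gymstheorem} to conclude that $T$ is topologically mixing and frequently hypercyclic. This uses exactly the hypotheses at hand, since a separable reflexive Banach space is in particular a separable Banach space, and by assumption $T$ is chain transitive with the positive shadowing property.

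Second, I would appeal to a result from the linear dynamics literature asserting that on a separable reflexive Banach space, every frequently hypercyclic bounded linear operator admits an ergodic invariant Borel probability measure with full support. Such statements are developed in \cite{dynamicsoflinearoperators} and in the subsequent works of Bayart, Grivaux and Matheron on invariant Gaussian measures for linear dynamical systems. The construction typically produces a Gaussian invariant measure whose support equals the closed linear span of the eigenvectors of $T$ associated with unimodular eigenvalues, and on a reflexive space this span can be arranged to coincide with $X$ whenever $T$ satisfies the Frequent Hypercyclicity Criterion, which is implied by frequent hypercyclicity in the present setting.

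The main obstacle in a fully self-contained argument is the construction of the invariant measure itself. A naive Krylov--Bogolyubov approach via empirical measures $\mu_N = \frac{1}{N}\sum_{n=0}^{N-1} \delta_{T^n z}$, with $z$ a frequently hypercyclic vector produced by Theorem \ref{gymstheorem}, fails directly, since the orbit of $z$ is dense in $X$ and therefore unbounded, so the family $\{\mu_N\}$ is not tight and has no obvious weakly convergent subsequence. This is precisely where reflexivity is needed: it provides weak compactness of bounded sets, which is the ingredient powering the Gaussian construction of the cited references and which a direct empirical-measure argument cannot dispense with.
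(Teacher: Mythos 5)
Your proposal is correct and follows essentially the same route as the paper: apply Theorem \ref{gymstheorem} to get frequent hypercyclicity, then invoke the Grivaux--Matheron result that frequently hypercyclic operators on reflexive Banach spaces admit an invariant measure with full support (the paper's citation \cite{invariant}). Your aside describing the internal mechanism of that result (Gaussian measures supported on unimodular eigenvectors, and the claim that frequent hypercyclicity implies the Frequent Hypercyclicity Criterion) is not accurate as stated, but it is not load-bearing, since the cited theorem is used as a black box exactly as in the paper.
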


\noindent \textbf{Proof:} It follows from Theorem \ref{gymstheorem} and the fact that frequently hypercyclic operators on reflexive Banach spaces admit an invariant measure with full support \cite{invariant}.

\qed

\textbf{Devaney chaotic} systems are those that are transitive and have a dense set of periodic points \cite{devaney2}. The next corollary gives a suficient condition for a system to have this property. 

\begin{corollary}
Let $X$ be a separable Banach space. If $T : X \to X$ has a dense set of periodic points and has the positive shadowing property then $T$ is Devaney chaotic.
\end{corollary}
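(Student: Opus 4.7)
The plan is to reduce the claim to Theorem A (gymstheorem) by showing that the hypotheses force $T$ to be chain transitive; once that is done, Theorem A delivers topological mixing, which implies transitivity, and combining this with the assumed density of periodic points yields Devaney chaos.

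First I would observe that every periodic point is automatically chain recurrent. Indeed, if $p$ satisfies $T^k p = p$ for some $k \geq 1$, then the finite sequence $\{p, Tp, T^2p, \ldots, T^{k-1}p, p\}$ is an actual orbit segment from $p$ to $p$, so for any $\eps > 0$ it qualifies as an $\eps$-chain. Hence the set of periodic points is contained in $CR(T)$.

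Next I would invoke Corollary \ref{chainspace}, which tells us that $CR(T)$ is closed. Since the periodic points are assumed to be dense in $X$ and sit inside the closed set $CR(T)$, we conclude $CR(T) = X$. By Corollary \ref{onerecurrenceclass}, $T$ has a single chain recurrent class, so this class must be all of $X$; that is, $T$ is chain transitive.

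Finally, since $T$ is chain transitive and has the positive shadowing property, Theorem \ref{gymstheorem} applies and $T$ is topologically mixing (in particular, topologically transitive) and frequently hypercyclic. Transitivity together with the assumed density of periodic points is precisely the definition of Devaney chaos, completing the proof. The only mildly subtle step is the first one, verifying that periodic points lie in $CR(T)$ and exploiting closedness; the rest is a direct appeal to the machinery already developed in the paper.
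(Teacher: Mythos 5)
Your proof is correct and follows essentially the same route as the paper: periodic points lie in the closed set $CR(T)$, density forces $CR(T)=X$, hence chain transitivity, and Theorem \ref{gymstheorem} plus the dense periodic points give Devaney chaos. The only difference is that you explicitly verify that a periodic orbit segment is an $\eps$-chain, a step the paper leaves implicit.
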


\noindent \textbf{Proof:} Since $CR(T)$ is closed, and the set of periodic points is dense, then $X=CR(T)$. Therefore, Theorem A implies that $T$ is frequently hypercyclic and topologically mixing.

\qed




\begin{corollary}
If $X$ is a separable Hilbert space and $T$ is an unitary operator, then $T$ does not have the positive shadowing property.
\end{corollary}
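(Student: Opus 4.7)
The plan is to argue by contradiction, combining Proposition \ref{unitaryimplieschaintransitivity} with Theorem \ref{gymstheorem} (Theorem A). Assume $X \neq \{0\}$ (otherwise the statement is vacuous) and suppose, for contradiction, that the unitary operator $T : X \to X$ has the positive shadowing property.

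First, I would invoke Proposition \ref{unitaryimplieschaintransitivity}: a unitary operator on an inner product space is a surjective isometry, hence chain transitive. Thus $T$ is a bounded linear operator on the separable Banach space $X$ enjoying both chain transitivity and positive shadowing, so the hypotheses of Theorem \ref{gymstheorem} are met. That theorem then yields that $T$ is frequently hypercyclic; in particular, there exists a hypercyclic vector $x \in X$, whose orbit $\{T^n x : n \in \mathbb{N}\}$ is dense in $X$.

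Next, I would use the isometry property of $T$ to contradict this density. Since $T$ is unitary, $\|T^n x\| = \|x\|$ for every $n \in \mathbb{N}$, so the entire orbit of $x$ lies on the sphere
\[
S_{\|x\|} = \{v \in X : \|v\| = \|x\|\}.
\]
Because $X$ is nontrivial, we can pick any vector $w \in X$ with $\|w\| \neq \|x\|$ and consider the open ball $B(w, r)$ with $r < |\|w\| - \|x\||$; by the reverse triangle inequality this ball is disjoint from $S_{\|x\|}$, so the orbit of $x$ misses it entirely. Hence no orbit of a unitary operator on a nontrivial Hilbert space can be dense, contradicting hypercyclicity of $x$. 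Therefore, $T$ cannot have positive shadowing.

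The only point that requires attention is the trivial case $X = \{0\}$, which must be implicitly excluded since the zero operator there is unitary and has shadowing vacuously; beyond this the argument is a direct composition of the previously proved results and presents no serious obstacle.
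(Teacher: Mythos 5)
Your proposal is correct and follows essentially the same route as the paper: chain transitivity from Proposition \ref{unitaryimplieschaintransitivity}, then Theorem A would force hypercyclicity, which is impossible for an isometry since every orbit stays on a sphere (the paper phrases this as ``since $\|T\|=1$ they may never be hypercyclic''). Your version merely spells out the sphere argument and the trivial-space caveat in more detail.
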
 

\noindent \textbf{Proof:} By proposition \ref{unitaryimplieschaintransitivity} unitary operators are chain transitive, but since $\|T\| = 1$ they may never be hypercyclic. Therefore, by Theorem A these operators cannot have positive shadowing.

\qed





Given a topological dynamical system $(Y,f)$, a point in $x \in Y$ is said to be a \textbf{non-wandering point} if, for every open set $B$ that contains $x$, and for every $N \in \mathbb{N}$, there is $n>N$, such that $f^{n}(B) \cap B \neq \emptyset$. If $(X,T)$ is a linear dynamical system, linearity implies that the origin is always a non wandering point of $X$. We call the set of non wandering points of \textbf{non wandering set}. The complement of the non wandering set is the \textbf{wandering set}.

It is not difficult to see that if $T$ is recurrent, then every $x \in X$ is non wandering, and if $T$ has a dense set of non wandering points, then $T$ is recurrent.

The next result can be easily proven (it is similar to Proposition 6 of \cite{Messaoudi}), but we state it since it is needed in the proof of Theorem B.

\begin{proposition} \label{shadowingforsubspaces}
Let $T$ be an operator on a Banach space $X$. Suppose that $X = M \oplus N$, where $M$ and $N$ are closed $T$-invariant subspaces of $X$. Then $T$ has positive shadowing property if, and only if, $T|_M$ and $T|_N$ both have positive shadowing property.
\end{proposition}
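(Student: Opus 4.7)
The plan is to handle the two directions separately, with the Bourbaki/Brezis open mapping estimate (Theorem 2.10 of \cite{Brezis}, already exploited in the proof of Theorem \ref{CRprojection}) as the main technical tool. Concretely, fix once and for all a constant $\alpha>0$ such that $\|m\|\leq\alpha\|m+n\|$ and $\|n\|\leq\alpha\|m+n\|$ for all $m\in M$ and $n\in N$; the projections $\pi_M$ and $\pi_N$ are then bounded with norms at most $\alpha$, and this is what will transfer pseudo orbits and shadowing points between $X$ and the summands.

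For the reverse implication, suppose $T|_M$ and $T|_N$ both have positive shadowing. Given $\eps>0$, pick $\delta_M,\delta_N>0$ that witness shadowing for $T|_M$ and $T|_N$ with tolerance $\eps/2$, and set $\delta=\min\{\delta_M,\delta_N\}/\alpha$. For a $\delta$-pseudo orbit $\{x_k\}$ in $X$, decompose $x_k=m_k+n_k$ with $m_k\in M$, $n_k\in N$; then $\|T m_k-m_{k+1}\|\leq \alpha\|Tx_k-x_{k+1}\|<\delta_M$ (and similarly for $\{n_k\}$), so $\{m_k\}$ and $\{n_k\}$ are pseudo orbits for $T|_M$ and $T|_N$. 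Shadowing each by $m\in M$ and $n\in N$ and adding, the vector $m+n$ will $\eps$-shadow the original pseudo orbit by the triangle inequality.

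For the forward implication, assume $T$ has positive shadowing. Given $\eps>0$, apply positive shadowing in $X$ with tolerance $\eps/\alpha$ to produce $\delta>0$. Any $\delta$-pseudo orbit $\{m_k\}$ in $M$ is a $\delta$-pseudo orbit in $X$, so there is $x\in X$ with $\|T^k x-m_k\|<\eps/\alpha$ for all $k$. Writing $x=m+n$ with $m\in M$, $n\in N$, note that $T^k x-m_k=(T^k m-m_k)+T^k n$ is already the $M\oplus N$ decomposition (using invariance), so $\|T^k m-m_k\|\leq \alpha\|T^k x-m_k\|<\eps$. Thus $m\in M$ shadows the pseudo orbit, and the same argument applies verbatim to $N$.

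The one place that requires care, and is the only real obstacle, is bookkeeping the factor $\alpha$ correctly in both directions, together with verifying that the decomposition $T^k x-m_k=(T^k m-m_k)+T^k n$ lies in $M\oplus N$ in the right way so that Brezis applies; this uses crucially that $M$ and $N$ are $T$-invariant. The rest is routine triangle-inequality manipulation, so the proof will be short.
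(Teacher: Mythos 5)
Your proof is correct. The paper does not actually supply a proof of this proposition---it only remarks that the result ``can be easily proven'' and is similar to Proposition 6 of \cite{Messaoudi}---and your argument via the bounded projections coming from the open mapping theorem (the same constant $\alpha$ the paper uses in Theorem \ref{CRprojection}) is exactly the standard route that reference takes; both directions, including the observation that $T^k x - m_k = (T^k m - m_k) + T^k n$ is the $M\oplus N$ decomposition by invariance, check out.
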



\noindent \textbf{Proof of Theorem \ref{theo:nonwandering}:} It is easy to see that the non-wandering set is contained in the set of chain recurrent elements. For the contrary inclusion if $x$ is chain recurrent, and $U$ is an open set that contains $x$, then there is a pseudo-orbit that goes from $x$ to $x$ and then to $x$ again and so on, and the shadowing property tells us that such pseudo orbit can be shadowed by a real orbit. This proves that $x$ is a non-wandering point. Since $\Omega$ is equal to the chain recurrent set, then Corollary \ref{chainspace} tells us that  $\Omega$ is a closed and invariant subspace of $X$. This allows us to obtain the first two conclusions.

By item 1, we have $\Omega=CR(T)$.  By Corollary \ref{chainspace}, $\Omega$ is $T$-invariant. This implies that $\Omega^\perp$ is $T^*$-invariant, consequently, it is $T$-invariant, because $T=T^*$. Since $X$ is a Hilbert space and $\Omega$ is a closed subspace of $X$ then $X=\Omega\oplus\Omega^{\perp}$. Hence, by Proposition \ref{shadowingforsubspaces}, $T|_{\Omega}$ has positive shadowing property. Corollary \ref{chainrecurrentselfadjoint} guarantees that $T|_{\Omega}$ is chain transitive. Since $T|_{\Omega} : \Omega \to \Omega$ has positive shadowing property and  is chain transitive, Theorem \ref{gymstheorem} gives that $T|_{\Omega}$ is topologically mixing and frequently hypercyclic.

\qed

\section{Open Questions}

In this section we leave some open questions for the reader. Question \ref{refereequestion} was kindly offered to us by the anonymous referee. Originally, this question only addressed the shadowing property, but we decided to expand it for chain recurrence as well.

\begin{question}
Is there any simple criterion to decide if a system is chain recurrent?
\end{question}

Since every hypercyclic operator is chain recurrent, then Kitai's Hypercyclic Criterion \cite{dynamicsoflinearoperators} is a simple criterion for chain recurrence. But since there are operators that are chain recurrent but not hypercyclic, e.g. the identity operator, it would be nice to find a tighter criterion.

\begin{question}
Is there any simple criterion to decide if a system has positive shadowing?
\end{question}

This question is not new and was addressed by other authors for the shadowing property. It seems that the notion of generalized hyperbolicity \cite{generalized} captures much of the essence of the shadowing property and could be equivalent to it. In view of the results presented in this text positive shadowing might be a more relevant property than shadowing itself, and therefore worthy of a similar search for an equivalent notion. Example \ref{eigen1andshadowing} shows that when $T$ has a right inverse that is a proper contraction, then $T$ will have positive shadowing.

\begin{question} \label{refereequestion}
Let $X$ be a normed vector space and $T:X \to X$ a linear operator with the positive shadowing property (and or chain recurrence), and let $Y \subset X$ be a closed invariant subspace. Under which hypotheses one may guarantee that the operator $T|_Y$ has the shadowing property (and or chain recurrence)?
\end{question}

Proposition \ref{shadowingforsubspaces} and Corollary \ref{CRforsubspaces} provide partial answers to this question. New results in this direction may provide a proof of the last item of Theorem B under more general assumptions than $X$ being Hilbert and $T$ being self-adjoint.

\begin{question}
Are the conclusions of Theorem A strong enough to imply the hypothesis?
\end{question}

More precisely does every frequently hypercyclic and topologically mixing linear dynamical system have positive shadowing? At the moment, the authors have no plausible argument that favors such conclusion, but we are unable to provide a counter-example to this claim. Based on Corollary \ref{prodct of shadowing and CR}, one may also ask a weaker version of this question, which is: if $(X,T)$ is a linear dynamical system such that any finite product of $T \times \ldots \times T$ is topologically mixing and frequently hypercyclic, then would this imply that $T$ has positive shadowing?


\vspace{0.5cm}

\noindent \textit{Acknowledgements: We would like to thank Prof. Udayan B. Darji for helpful comments on an earlier version of this text. We would also like to show our deep appreciation for the careful review and interesting suggestions given by the anonymous referee, which include among others, lemma \ref{refereelemma} and its proof, Corollary \ref{properdilation}, example \ref{refereeexample} and question \ref{refereequestion}. M.B.A. was supported by CAPES. R.V. was partially supported by CNPq.}


\end{document}